\theoremstyle{plain}
   \newtheorem{theorem}{Theorem}[section]
   \newtheorem{proposition}[theorem]{Proposition}
   \newtheorem{lemma}[theorem]{Lemma}
   \newtheorem{corollary}[theorem]{Corollary}
   \newtheorem{problem}[theorem]{Problem}
\theoremstyle{definition}
   \newtheorem{definition}[theorem]{Definition}
\newtheorem{example}[theorem]{Example} 
\theoremstyle{remark}
 \newtheorem{remark}[theorem]{Remark}
\author[P.~Br\"and\'en]{Petter Br\"and\'en}
\thanks{Supported by the G\"oran Gustafsson Foundation.}
\address{Department of Mathematics, Royal Institute of Technology,
       SE-100 44 Stockholm, Sweden} 
\email{pbranden@math.kth.se}       
       \address{Department of Mathematics, 
Stockholm University, 
SE-106 91 Stockholm, Sweden}
\email{pbranden@math.su.se}
\keywords{Heine--Stieltjes Theorem, Heine--Stieltjes polynomials, Van Vleck polynomials, hyperbolic polynomials, real zeros, interlacing zeros}
\subjclass[2000]{34L05, 30C15}
\newcommand{\NN}{\mathbb{N}}
\newcommand{\xx}{\mathbf{x}}
\newcommand{\yy}{\mathbf{y}}
\newcommand{\RR}{\mathbb{R}}
\newcommand{\CC}{\mathbb{C}}
\renewcommand{\Im}{{\rm Im}}
\def\newop#1{\expandafter\def\csname #1\endcsname{\mathop{\rm
#1}\nolimits}}
\begin{document}
\title[A generalization of the Heine--Stieltjes theorem]
{A generalization of the Heine--Stieltjes theorem }
\begin{abstract}
The Heine--Stieltjes Theorem describes the polynomial solutions, $(v,f)$ such that  $T(f)=vf$, to specific second order differential operators, $T$, with polynomial coefficients. We extend the theorem to concern all (non-degenerate) differential operators preserving the property of having only real zeros, thus solving a conjecture of B. Shapiro. The new methods  developed are used to describe intricate interlacing relations between the zeros of different pairs of solutions. This extends recent results of Bourget, McMillen and Vargas for the Heun equation and answers their question on how to generalize their results to higher degrees.  Many of the results are new even for the classical case. 
\end{abstract}
\maketitle

\section{Introduction and Main Results}
Special cases of the following general eigen-problem have been studied frequently since the 1830's. 
\begin{problem}\label{HS-problem}
Consider a linear operator $T : \CC[z] \rightarrow \CC[z]$. Describe all pairs $(v, f) \in \CC[z]\times \CC[z]$, with $f \not \equiv 0$, such that  
$T(f)= v f$. 
\end{problem}
Recently the problem has received new interest from different perspectives, see e.g. \cite{BM, BMV, MS, Shap08}. 

We say that $v$ above is a \emph{(generalized) Van Vleck polynomial} and that $f$ is a \emph{(generalized) Stieltjes polynomial} for $T$. The following theorem  is known as the Heine--Stieltjes Theorem \cite{Szego} and is due to Stieltjes \cite{Stieltjes}, except for the statement regarding the location of the zeros of the Van Vleck polynomials in (3), which is due to Van Vleck \cite{VV}.

\begin{theorem}[Heine--Stieltjes Theorem]\label{SVB} Suppose that $T : \CC[z] \rightarrow \CC[z]$ is a 
differential operator of the form  
$
T= Q_2(z)D^2+Q_1(z)D
$, 
where $D= d/dz$, $Q_2(z)=\prod_{j=1}^d(z-\alpha_j)$ and 
$$
Q_1(z) = Q_2(z)\sum_{j=1}^d \frac {\gamma_j}{z-\alpha_j}, 
$$
where $\alpha_1< \cdots < \alpha_d$ are real, $d \geq 2$, and $\gamma_1, \ldots,\gamma_d$ are positive. Let $n \geq 2$ be an integer. Then 
\begin{enumerate}
\item There are exactly $\binom {n+d-2} n$ pairs $(v,f) \in \CC[z] \times \CC[z]$ such that $T(f)=vf$ and $f$ is monic of degree $n$;
\item For each Van Vleck polynomial there is a unique monic Stieltjes polynomial;
\item The zeros of $v$ and $f$ in (1) are real, simple and belong to the interval $(\alpha_1, \alpha_d)$.
\end{enumerate}
\end{theorem}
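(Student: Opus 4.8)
The plan is to prove (2) immediately and to obtain (1) and (3) together by setting up a bijection between the solution pairs $(v,f)$ and the weak compositions $\nu=(n_1,\dots,n_{d-1})$ of $n$ into $d-1$ nonnegative parts; there are exactly $\binom{n+d-2}{n}$ of the latter. Part (2) follows from leading coefficients: in $T(f)=Q_2f''+Q_1f'=vf$ the polynomial $T(f)$ has degree exactly $n+d-2$ with leading coefficient $n(n-1)+n\sum_{j=1}^d\gamma_j$, which is nonzero precisely because the $\gamma_j$ are positive; hence $v=T(f)/f$ is forced and $\deg v=d-2$, so each Van Vleck polynomial has a unique monic Stieltjes polynomial. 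Writing $v=Q_2\,f''/f+Q_1\,f'/f$ and using that $v$, being a polynomial, has no poles already constrains the zeros of $f$: if $f$ has a zero of order $m\ge 2$ away from $\{\alpha_1,\dots,\alpha_d\}$ then $Q_2f''/f$ produces a pole of order $2$ there, while if $f$ has a zero of order $m\ge 1$ at some $\alpha_j$ then $v$ acquires a simple pole with residue $m\bigl(\prod_{k\ne j}(\alpha_j-\alpha_k)\bigr)(m-1+\gamma_j)\ne 0$ (using $Q_1(\alpha_j)=\gamma_j\prod_{k\ne j}(\alpha_j-\alpha_k)$ and $\gamma_j>0$). Both are impossible, so every zero of $f$ is simple and avoids $\{\alpha_1,\dots,\alpha_d\}$.

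Next I would locate those zeros. For each zero $x_i$ of $f$ one has $T(f)(x_i)=v(x_i)f(x_i)=0$ with $Q_2(x_i)f'(x_i)\ne 0$, so dividing and using $f''(x_i)/f'(x_i)=\sum_{k\ne i}2/(x_i-x_k)$ and $Q_1/Q_2=\sum_j\gamma_j/(z-\alpha_j)$ yields the equilibrium relations $\sum_{k\ne i}2/(x_i-x_k)+\sum_{j=1}^d\gamma_j/(x_i-\alpha_j)=0$ for $i=1,\dots,n$, where the $x_i$ are a priori complex. Taking the imaginary part at a zero $x_{i_0}$ of maximal imaginary part, every term is $\le 0$ and the $\gamma_j$-terms are strictly negative unless $\Im x_{i_0}=0$; combined with the symmetric argument at a zero of minimal imaginary part, this shows all $x_i$ are real. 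Then the relation at the largest zero would be a sum of strictly positive terms if that zero were $\ge\alpha_d$ (it cannot equal $\alpha_d$, by the first paragraph), so the largest zero lies below $\alpha_d$; similarly the smallest lies above $\alpha_1$. Hence the zeros of $f$ are simple, real, and contained in $(\alpha_1,\alpha_d)$.

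For the count, fix a weak composition $\nu=(n_1,\dots,n_{d-1})$ of $n$ and let $\Omega_\nu=\{x\in\RR^n:\ x_1<\dots<x_n\text{ and exactly }n_j\text{ of them lie in }(\alpha_j,\alpha_{j+1})\text{ for each }j\}$, a bounded open convex set. On $\Omega_\nu$ the energy $E_\nu(x)=\sum_{i<k}\log(x_i-x_k)^2+\sum_{i,j}\gamma_j\log|x_i-\alpha_j|$ has Hessian that is strictly diagonally dominant with negative diagonal (the strictness coming from the terms $-\gamma_j/(x_i-\alpha_j)^2$ with $\gamma_j>0$), hence negative definite; so $E_\nu$ is strictly concave, and since $E_\nu\to-\infty$ at $\partial\Omega_\nu$ it attains its maximum at a unique interior critical point. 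The critical-point equations are precisely the equilibrium relations above for $f_\nu:=\prod_i(z-x_i)$, so $(T(f_\nu)/f_\nu,\,f_\nu)$ is a solution with $n_j$ zeros in each $(\alpha_j,\alpha_{j+1})$. Conversely, by the previous two paragraphs the zeros of any solution $f$ are simple, real, off $\{\alpha_j\}$, and in $(\alpha_1,\alpha_d)$, hence define a composition $\nu$, and since they satisfy the equilibrium relations they coincide with the unique critical point of $E_\nu$, so $f=f_\nu$. Thus solutions correspond bijectively to weak compositions of $n$ into $d-1$ parts, giving exactly $\binom{n+d-2}{n}$ of them; this is (1), and (2) was noted above.

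It remains to place the zeros of $v$. Set $\rho(z)=\prod_{j=1}^d|z-\alpha_j|^{\gamma_j}$, so $\rho>0$ and $\rho'/\rho=Q_1/Q_2$ off $\{\alpha_j\}$, and rewrite $T(f)=vf$ as $(\rho f')'=(\rho/Q_2)\,vf$. Now $\rho f'$ is continuous on $[\alpha_1,\alpha_d]$ and vanishes at $\alpha_1,\dots,\alpha_d$ and at the $n-1$ simple real critical points of $f$, all lying in $(\alpha_1,\alpha_d)$; Rolle on each interval between consecutive zeros of $\rho f'$ (valid at the $\alpha_j$ since $\rho f'$ is continuous there and smooth on the adjacent open intervals) yields a zero of $(\rho f')'$, which is a zero of $vf$ off $\{\alpha_j\}$ because $\rho/Q_2$ is nonvanishing there; adding the zeros of $v$ at those $\alpha_j$ that happen to be critical points of $f$ (which occur exactly when $T(f)(\alpha_j)=Q_1(\alpha_j)f'(\alpha_j)=0$), a count gives $vf$ at least $n+d-2$ distinct real zeros in $(\alpha_1,\alpha_d)$; since $\deg(vf)=n+d-2$ these are all of them and all simple, so deleting the $n$ zeros of $f$ leaves the $d-2$ zeros of $v$, which are real, simple, in $(\alpha_1,\alpha_d)$, and disjoint from those of $f$. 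This completes (3). The step I expect to be the genuine obstacle is the converse half of the count --- ruling out any solution with a complex zero, a multiple zero, or a zero at some $\alpha_j$, so that the $\binom{n+d-2}{n}$ compositions really do exhaust the solutions; in the classical treatment the sharp count rests on a separate algebraic argument of Heine, and the content of the approach above is that the pole analysis of $v$, the equilibrium relations, the extremal-zero estimates, and the strict concavity of $E_\nu$ together force this rigidity. A point requiring care is then the zero count in the last paragraph, which must be checked to exhaust all $n+d-2$ zeros of $vf$ without overlap, uniformly over the positions of the critical points of $f$ relative to the $\alpha_j$.
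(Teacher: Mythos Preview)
Your argument is essentially correct and follows the classical route due to Stieltjes himself---the electrostatic/variational approach---which is genuinely different from the paper's treatment. The paper does not prove Theorem~\ref{SVB} directly; instead it deduces it (Remark~\ref{why}) as the special case $M=1$, $N=2$ of Theorem~\ref{Main}. That general proof uses the characterization of hyperbolicity preservers via stable polynomials (Theorem~\ref{char}), obtains existence of a pair with prescribed zero pattern from Brouwer's fixed-point theorem applied to the map $\phi$ on $I^n$, bounds the number of Van Vleck polynomials from above by the Heine--Shapiro count (Theorem~\ref{sapiro}), and gets co-primality and simplicity from Proposition~\ref{multi}. What the paper's method buys is generality: it applies to every non-degenerate hyperbolicity-preserving differential operator of arbitrary order. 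What your method buys is self-containment and constructivity: the unique critical point of the strictly concave energy $E_\nu$ actually \emph{identifies} the Stieltjes polynomial attached to the composition $\nu$, without invoking Heine's algebraic count or the theory of stable polynomials; but the argument is tied to second order and to the positivity of the $\gamma_j$, which is exactly what makes the Hessian strictly diagonally dominant and forces the residue $m(m-1+\gamma_j)\prod_{k\neq j}(\alpha_j-\alpha_k)$ to be nonzero.

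Two small points. First, your derivation of (2) is stated backwards: showing that $v=T(f)/f$ is forced proves that $f$ determines $v$, not the converse. The missing line is that the leading coefficient $\lambda_n=n\bigl(n-1+\sum_j\gamma_j\bigr)$ of $v$ is strictly increasing in $n$, so two monic Stieltjes polynomials for the same $v$ have the same degree, and their difference would be a Stieltjes polynomial of strictly smaller degree for the same $v$, a contradiction; this is precisely Lemma~\ref{vv}. Second, the Rolle count in your last paragraph works, but you can shorten the endgame: once you have exhibited $n+d-2$ \emph{distinct} real zeros of $vf$ in $(\alpha_1,\alpha_d)$ and you know $\deg(vf)=n+d-2$, simplicity of all zeros of $vf$ (hence of $v$), the inclusion of the $n$ zeros of $f$ among them, and the disjointness of the zero sets of $v$ and $f$ all follow immediately.
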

The problem studied in this paper is how to generalize the Heine--Stieltjes Theorem to larger families of linear operator. Which properties of $T$ in the Heine--Stieltjes Theorem are essential? 
We will see that the essential property of $T$ is that it preserves the property of having only real zeros.  

A polynomial $f \in \RR[z]$ is {\em hyperbolic} if all its zeros are real and a linear operator 
$T : \RR[z] \rightarrow \RR[z]$ {\em preserves hyperbolicity} if $T(f)$ is hyperbolic or identically zero whenever $f$ is hyperbolic. Such operators were recently characterized in \cite{BBannals}.
Our extension of Theorem \ref{SVB} is to differential operators, of finite but arbitrary order, that preserve hyperbolicity.  Let 
\begin{equation}\label{diffop}
T= \sum_{k=M}^N Q_k(z)D^k, \quad \mbox{ where } Q_M(z)Q_N(z)\not \equiv 0,
\end{equation}
be a differential operator with polynomial coefficients. The number $$r= \max\{ \deg Q_k -k : M \leq k \leq N\}$$ is called the 
{\em Fuchs index} of $T$, and $T$ is {\em non-degenerate} if $\deg Q_N = N+r$. 

For a positive integer $m$, let $[m]=\{1,\ldots,m\}$. An {\em ordered partition of $[m]$ into two parts} is a pair $(A,B)$ where $A, B \subseteq [m]$, $A\cup B=[m]$ and $A\cap B=\emptyset$. 
Let $f$ and $g$ be two hyperbolic polynomials of degree $r$ and $n$, respectively and 
let  $\alpha_1 \leq \alpha_2 \leq \cdots \leq \alpha_{n+r}$ be the zeros of $fg$. Let  further $(A,B)$ be an ordered partition of 
the set $[n+r]$, such that $|A|=r$ and $|B|=n$. The pair 
$(f,g)$ is said to have {\em zero pattern} $(A,B)$ if the zeros of $f$ are 
$\{\alpha_{i} : i \in A\}$ and the zeros of $g$ are $\{\alpha_{i} : i \in B\}$. 
Note that the zero pattern is unique if and only if $f$ and $g$ are co-prime.  

 It was conjectured by B. Shapiro \cite{Shap08, Shap05}, after extensive computer experiments, that a Heine--Stieltjes Theorem should hold for non-degenerate hyperbolicity preserving differential operators. The solution to this conjecture, Theorem \ref{Main}, is our main result.  The reason why Theorem \ref{SVB} is a special case of Theorem \ref{Main} is explained in Remark \ref{why}. 
\begin{theorem}\label{Main}
Let  $T= \sum_{k=0}^N Q_k(z) D^k$,  where $Q_N(z) \not \equiv 0$,  be a non-degenerate hyperbolicity preserving differential operator with nonnegative  Fuchs index $r$, and suppose that $n\in \NN$ is such that there is no $\theta \in \RR$ such that $Q_0(\theta)= \cdots = Q_n(\theta)=0$. Then
\begin{enumerate}
\item If $f_1, f_2$ are monic polynomial for which $(v,f_1)$ and $(v,f_2)$ are solutions to Problem \ref{HS-problem}, then $f_1=f_2$;
\item For all $S \subseteq [n+r]$ of cardinality $r$ there is a unique pair $(v,f)$ of hyperbolic polynomials with zero pattern $(S,[n+r]\setminus S)$ such that $T(f)=vf$, with $f$ monic of degree $n$. These  are all Stieltjes and Van Vleck polynomials. Hence there are exactly $\binom {n+r} r$ pairs of polynomials $(v,f)$ such that 
$T(f)=vf$, with $f$ monic of degree $n$;
 \item   $v$ and $f$ are  co-prime and their zeros lie in the convex hull of the zeros of  $Q_e$, where $e= \min(n,N)$;
 \item If  there is no $\theta \in \RR$ such that $Q_1(\theta)= \cdots = Q_n(\theta)=0$ then $v$ and $f$ have simple zeros which all lie in the interior of the convex hull of the zeros of  $Q_e$, where $e= \min(n,N)$.
 \end{enumerate}
\end{theorem}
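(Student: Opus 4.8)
\emph{Proof plan.} The statement of Theorem~\ref{Main} splits into an \emph{algebraic} part --- the exact count $\binom{n+r}{r}$ together with assertion~(1) --- and an \emph{analytic} part --- the realization of each zero pattern by a \emph{hyperbolic} pair, together with the localizations (3) and (4). The plan is to begin with degree bookkeeping. By the very definition of the Fuchs index one has $\deg Q_k\le k+r$ for all $k$, so $\deg T(f)\le n+r$ whenever $\deg f=n$, and hence any Van Vleck polynomial $v$ has $\deg v\le r$. The role of the two hypotheses is to make the top part of $T$, acting on polynomials of degree $\le n$, non-degenerate: non-degeneracy ($\deg Q_N=N+r$) together with the assumption that $Q_0,\dots,Q_n$ have no common real zero singles out $Q_e$, $e=\min(n,N)$, as the effective leading coefficient, determines the top coefficient of $T(f)$, and prevents a solution from being forced to share a factor with every $Q_k$. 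The solutions $(v,f)$ with $f$ monic of degree $n$ then form the zero set of an explicit polynomial system --- $n+r$ equations in the $n+r$ lower coefficients of $f$ and of $v$, or equivalently the zero set of the map sending a monic $f$ of degree $n$ to the remainder of $T(f)$ upon division by $f$, a polynomial map $\CC^{n}\to\CC^{n}$ --- and a Heine-type elimination/B\'ezout argument bounds the number of solutions, counted with multiplicity, by $\binom{n+r}{r}$.

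For~(1) I would argue by descent: if $(v,f_1)$ and $(v,f_2)$ are solutions with $f_1,f_2$ monic of degree $n$, then $g:=f_1-f_2$ satisfies $T(g)=vg$ with $\deg g<n$, and iterating inside $\ker(T-v)$ (with $v$ now meaning multiplication by $v$) would produce a nonzero solution of some degree $m<n$ sharing the same $v$. But, by the non-degeneracy of the top part established above, a solution of degree $m$ forces a leading behaviour for $v$ incompatible with the one forced by degree $n$; hence $g\equiv0$ and $f_1=f_2$.

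The core of the proof is to produce, for every $S\subseteq[n+r]$ with $|S|=r$, a pair $(v,f)$ of hyperbolic polynomials of zero pattern $(S,[n+r]\setminus S)$, with $f$ monic of degree $n$ and $T(f)=vf$. I would do this by a continuity argument. After extending scalars to the ordered field $\RRR$ of Puiseux series one may assume $Q_N$ has simple zeros and the coefficients are otherwise generic; then one joins $T$ through a path $T_s$, $s\in[0,1]$, of non-degenerate hyperbolicity-preserving differential operators of Fuchs index $r$, all satisfying the same no-common-zero hypothesis for this $n$, to a model operator $T_0$ in the class whose Stieltjes and Van Vleck polynomials are completely understood --- one hyperbolic pair per zero pattern, hence $\binom{n+r}{r}$ of them (for $N=2$ this is exactly Theorem~\ref{SVB}; for general $N$ one takes a suitable higher-order analogue of the classical operator). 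Along the path, the hyperbolicity of $T_s(f)$ for hyperbolic $f$ keeps any continuously varying branch of solutions hyperbolic and makes its zero pattern \emph{locally constant}, while non-degeneracy together with the no-common-zero hypothesis supplies \emph{properness}: no zero of $f$ or of $v$ can run off to infinity, solutions of different patterns cannot collide, and no solution can drop in degree. Hence the $\binom{n+r}{r}$ hyperbolic solutions of $T_0$ continue to $\binom{n+r}{r}$ distinct hyperbolic solutions of $T$, one for each zero pattern; letting $t\to0^{+}$ recovers the original $T$, the no-common-zero hypothesis guaranteeing that no zero of $f$ merges with one of $v$ in the limit. Confronted with the algebraic upper bound this forces equality: there are exactly $\binom{n+r}{r}$ solutions with $f$ monic of degree $n$, all of them hyperbolic, they are in bijection with the zero patterns, and --- as $S$ and its complement are disjoint --- $v$ and $f$ are coprime. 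This gives~(2) and the coprimality in~(3).

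It remains to locate the zeros. The convex-hull bound in~(3) I would carry from $T_0$, where it holds (and holds strictly), along the homotopy, using that the convex hull of the zeros of $Q_e$, $e=\min(n,N)$, varies continuously and that properness keeps the whole configuration bounded. Under the stronger hypothesis of~(4) the bound becomes strict and the zeros simple; this I would get either by propagating the strict (open) version along the homotopy, or directly by a sign analysis at an extreme zero in the spirit of Van Vleck's classical argument --- if the largest zero $\beta$ of $vf$ lay on the boundary of the convex hull of the zeros of $Q_e$, then substituting $\beta$ into $T(f)=vf$ and comparing the signs of the $Q_k(\beta)$ with those of the $f^{(k)}(\beta)$ (all of definite sign to the right of the relevant zeros) yields a contradiction. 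The hard part will be the analytic step, and within it two things in particular: proving \emph{properness} of the homotopy --- precisely where non-degeneracy (controlling the behaviour at the top degree) and the hypothesis that $Q_0,\dots,Q_n$ (respectively $Q_1,\dots,Q_n$) have no common real zero (preventing solutions from colliding or collapsing to lower degree) must be used in an essential way --- and producing a model operator $T_0$ that is at once in the stated class, has a transparent and complete solution set, and is connected to $T$ by a path that stays in the class.
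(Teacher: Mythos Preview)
Your plan diverges from the paper's in the central existence step, and the route you choose is substantially harder while also containing a genuine gap.

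\textbf{What the paper actually does.} For existence the paper does \emph{not} use a homotopy. It sets $I$ equal to the convex hull of the zeros of $Q_e$, $e=\min(n,N)$, and defines $\phi:I^n\to I^n$ by sending the zero-vector of a monic $h$ of degree $n$ to the zero-vector of the degree-$n$ factor $g$ in the factorization $T(h)=vg$ with zero pattern $(S,[n+r]\setminus S)$. That $\phi$ lands in $I^n$ is a one-line lemma (Lemma~\ref{inclusion}): since $Q_M\ll Q_{M+1}\ll\cdots\ll Q_N$ and their leading signs agree, every $Q_i f^{(i)}$ has all zeros in $I(Q_e f)$, hence so does $T(f)$. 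Hurwitz gives continuity, Brouwer gives a fixed point, and the localization in~(3) comes for free. No model operator, no path, no properness.

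\textbf{The missing ingredient in your plan.} Coprimality and simplicity in the paper come from a multiplicity lemma (Proposition~\ref{multi}): if $T$ is a hyperbolicity preserver, $f$ hyperbolic, and $(z-\theta)^d\mid T(f)$ with $d\ge 2$, then either $Q_M(\theta)=\cdots=Q_n(\theta)=0$ or $(z-\theta)^{d+t}\mid f$ with $t\ge 1$. Applied to $T(f)=vf$, a common zero of $v$ and $f$ would force the multiplicity of $\theta$ in $f$ to jump, a contradiction; the same lemma gives simplicity under the stronger hypothesis of~(4). Your plan has no analogue of this, and in fact your coprimality argument is wrong as stated: ``as $S$ and its complement are disjoint --- $v$ and $f$ are coprime'' does not follow. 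A pair $(v,f)$ can perfectly well have zero pattern $(S,[n+r]\setminus S)$ with $v(\theta)=f(\theta)=0$; it just means $\alpha_i=\alpha_{i+1}=\theta$ with $i\in S$, $i+1\notin S$, and then the zero pattern is not unique. It is precisely Proposition~\ref{multi} (and hence the no-common-zero hypothesis on $Q_0,\dots,Q_n$) that rules this out. The same lemma is what would be needed to justify your ``no zero of $f$ merges with one of $v$ in the limit''; without it, the homotopy does not prevent two patterns from coalescing.

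\textbf{On the homotopy itself.} Even granting a multiplicity lemma, your scheme requires a model operator $T_0$ of arbitrary order $N$ and Fuchs index $r$ whose Heine--Stieltjes problem is completely solved, together with a path through non-degenerate hyperbolicity preservers of the same Fuchs index satisfying the no-common-zero hypothesis for the given $n$. You do not exhibit such a $T_0$, and the paper does not either --- because it does not need one. The fixed-point argument produces the $\binom{n+r}{r}$ hyperbolic solutions directly; Shapiro's upper bound (Theorem~\ref{sapiro}) then forces these to be all of them. Your degree-bookkeeping for~(1) is essentially the paper's Lemma~\ref{vv}, and your sign analysis for the strict localization in~(4) is close to the paper's endpoint argument; those parts are fine.
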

That $v$ and $f$ are co-prime in the original Heine--Stieltjes theorem  is due to Shah \cite{Shah}. 

It follows from Lemma \ref{coeff} below that the coefficients $Q_j(z)$, for $0 \leq j \leq N$, in Theorem \ref{Main} are hyperbolic. Hence the convex hull of the zeros of $Q_e(z)$ in Theorem \ref{Main} (3) and (4) is a closed interval.  

\begin{example}\label{Ex1}
A natural class of  differential operators satisfying the hypothesis of Theorem \ref{Main}
consists of operators of the form 
$$
T(f)(z)= D^{N-M}(PD^Mf)(z)= \sum_{k=M}^N \binom {N-M} {k-M} P^{(N-k)}(z)D^kf(z),
$$ where $P$ is a hyperbolic polynomial of degree $N+r$, with no zero of multiplicity larger than $N-M$. 
\end{example}

The following theorem offers a way of (in theory) generating Stieltjes and Van Vleck polynomials. Note that there is no restriction on the common zeros of the $Q_k$s. 
\begin{theorem}\label{Main2}
Let  $T= \sum_{k=0}^N Q_k(z) D^k$ be a non-degenerate hyperbolicity preserving differential operator with nonnegative  Fuchs index $r$. Suppose that $n \in \NN$ is such that $T(z^n) \not \equiv 0$ and  that $A \subseteq [n+r]$ is a set of cardinality $r$. Let $a$ be the smallest zero of $Q_e(z)$, where $e= \min(n,N)$. 

Define two sequences of polynomial $\{f_i\}_{i=0}^\infty$ and $\{v_i\}_{i=1}^\infty$ recursively by 
$f_0(z)= (z-a)^n$ and 
$$
T(f_i)=v_{i+1}f_{i+1},
$$
where $v_{i+1}f_{i+1}$ is the unique factorization of $T(f_i)$ such that $f_{i+1}$ is monic and 
the pair $(v_{i+1}, f_{i+1})$ has zero pattern $(A,B)$, where $B= [n+r] \setminus A$. Then 
$$
\lim_{i \rightarrow \infty} f_i =f \quad \mbox{ and } \quad \lim_{i \rightarrow \infty} v_i =v,
$$
where $T(f)=vf$ and $(v, f)$ have zero pattern $(A,B)$.

\end{theorem}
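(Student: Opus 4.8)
The plan is to realize $T$ as a contraction-type map on an appropriate space of (pairs of) monic hyperbolic polynomials with prescribed zero pattern, and then invoke a fixed-point argument. First I would observe that by Theorem \ref{Main}, since $T(z^n)\not\equiv 0$ guarantees the relevant nonvanishing hypothesis (one checks that if $Q_0(\theta)=\cdots=Q_n(\theta)=0$ for some real $\theta$, then $T(z^n)\equiv 0$, so the hypothesis of Theorem \ref{Main} is in force, at least after passing to the weaker conclusions (1)--(3)), there is a \emph{unique} pair $(v,f)$ with zero pattern $(A,B)$, $f$ monic of degree $n$, solving $T(f)=vf$; this is the target limit. The key structural fact needed is that the factorization step in the recursion is well-defined: one must show $T(f_i)\not\equiv 0$ and that $T(f_i)$, being hyperbolic (as $f_i$ is hyperbolic and $T$ preserves hyperbolicity), admits a factorization $v_{i+1}f_{i+1}$ with the prescribed zero pattern $(A,B)$ and $f_{i+1}$ monic of degree $n$. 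Degree bookkeeping via the Fuchs index and non-degeneracy gives $\deg T(f_i)=n+r$ with leading behaviour controlled, so $\deg v_{i+1}=r$, $\deg f_{i+1}=n$; the zero-pattern selection just reads off which $r$ of the $n+r$ zeros go to $v_{i+1}$.

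Next I would set up the convergence. Let $X$ be the set of monic hyperbolic polynomials of degree $n$ whose zeros lie in a fixed compact interval $I$ containing the convex hull of the zeros of $Q_e$ (such an interval is invariant by Theorem \ref{Main}(3): the zeros of every $f_i$ with $i\geq 1$ stay in it, and $f_0=(z-a)^n$ has its zero at an endpoint, hence in $I$). Then $X$ is a compact metric space (coefficients range over a compact set), and the map $\Phi:f\mapsto f_{+}$, where $Tf = v_{+}f_{+}$ with zero pattern $(A,B)$, is continuous on $X$ away from the locus where the factorization type degenerates — and by Theorem \ref{Main}(3) applied to the \emph{fixed point}, together with an open/closed argument, one shows $\Phi$ is in fact continuous on all of $X$, or at least on the forward orbit closure. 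The fixed points of $\Phi$ are exactly the Stieltjes polynomials $f$ with zero pattern $(A,B)$, of which there is exactly one by Theorem \ref{Main}(2). So it remains to show the orbit $\{f_i\}$ converges; by compactness every subsequence has a convergent sub-subsequence whose limit is a fixed point, hence equals $f$, which forces $f_i\to f$, and then $v_i = Tf_{i-1}/f_i \to Tf/f = v$.

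The main obstacle, and where the real work lies, is proving that the \emph{only} limit point of the orbit is the fixed point, i.e. ruling out that $\Phi$ has a nontrivial attracting cycle or that limit points could be spurious factorizations. The clean way is to find a strict Lyapunov function, or equivalently to show $\Phi$ is eventually a contraction near $f$ and that the orbit enters every neighbourhood of $f$. I expect the right tool is the ``interlacing'' / monotonicity structure the paper develops: ordering monic hyperbolic polynomials of degree $n$ with zeros in $I$ by a suitable partial order (coordinatewise comparison of the sorted zero vectors, restricted to the slots in $B$), one shows $\Phi$ is \emph{monotone} with respect to this order and that iterating $\Phi$ pinches the zero vector monotonically between two extremal solutions which, by uniqueness of the fixed point, coincide. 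Concretely, starting from $f_0=(z-a)^n$ — all zeros at the left endpoint, the minimal element — the sequence of zero-vectors should be monotone (increasing in each $B$-coordinate) and bounded above by the zeros of $f$, hence convergent, with limit a fixed point, hence $f$. Establishing this monotonicity of $\Phi$ is the crux; it should follow from the behaviour of the map $f\mapsto Tf$ on zeros (a ``zero dynamics'' computation using that $T$ preserves hyperbolicity and the sign pattern of the coefficients $Q_j$, which are hyperbolic by Lemma \ref{coeff}), combined with the zero-pattern selection rule. Once monotone convergence of the zeros is in hand, convergence of $f_i$ and then of $v_i$ in coefficient norm is immediate, and the limit pair solves $T(f)=vf$ with zero pattern $(A,B)$ by continuity, completing the proof.
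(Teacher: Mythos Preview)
Your strategy---iterate an order-preserving map $\Phi$ starting from the minimal element $(a,\ldots,a)$ of the cube $I^n$, obtain a monotone sequence of zero-vectors, and invoke compactness to get convergence to a fixed point---is exactly the paper's approach. Two points, however, need correction.

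First, your attempt to feed the hypothesis $T(z^n)\not\equiv 0$ back into Theorem~\ref{Main} is both incorrect and unnecessary. The implication ``$Q_0(\theta)=\cdots=Q_n(\theta)=0$ for some $\theta$ $\Rightarrow$ $T(z^n)\equiv 0$'' is false (e.g.\ $T=zD+z$, $n=1$, $\theta=0$). Fortunately you never need uniqueness of the fixed point: monotone plus bounded already forces the sequence $\{\alpha^k\}$ to converge, and any limit of $\alpha^{k+1}=\phi(\alpha^k)$ with $\phi$ continuous is automatically a fixed point. So drop the appeal to Theorem~\ref{Main} entirely; likewise, ``bounded above by the zeros of $f$'' should simply read ``bounded in $I^n$''.

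Second, and more substantively, you leave the monotonicity of $\Phi$ as a black box (``zero dynamics'', ``sign pattern of the $Q_j$''), and that is where the actual content lies. The paper's mechanism is clean and worth knowing: (i) if $\alpha\ll\beta$ in the interlacing sense, then the corresponding polynomials satisfy $f_\alpha\ll f_\beta$, hence $T(f_\alpha)\ll T(f_\beta)$ by Proposition~\ref{properpres}, and extracting the $B$-labelled zeros on each side gives $\phi(\alpha)\ll\phi(\beta)$; (ii) any pair $\alpha\leq\beta$ of weakly increasing vectors in $I^n$ can be joined by a finite chain $\alpha=\gamma^0\ll\gamma^1\ll\cdots\ll\gamma^m=\beta$, so step (i) propagates along the chain to yield $\phi(\alpha)\leq\phi(\beta)$. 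This two-step argument (interlacing $\Rightarrow$ interlacing, then chain-lift to the product order) is the crux you are missing; the vaguer ``sign pattern'' route you sketch does not obviously deliver it.
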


In Section \ref{inter} we use Theorem \ref{Main2} to derive interlacing properties of the zeros of different Stieltjes and Van Vleck polynomials. These generalize recent results of  Bourget, McMillen and Vargas \cite{BMV} and Bourget and McMillen \cite{BM} derived for the Heun equations, and answers the question raised in \cite{BM} on how to generalize their results to 
Van Vleck polynomials of higher degree. 

\section{Properties of Hyperbolicity Preservers}
Here we collect the results on hyperbolicity preserving differential operators  needed to prove Theorem \ref{Main}. Some results are new and of independent interest. 
A polynomial $f(z_1,\ldots, z_n) \in \CC[z_1,\ldots, z_n]$ is {\em stable} if 
$f(z_1,\ldots, z_n) \neq 0$ whenever $\Im(z_j)>0$ for all $1\leq j \leq n$. Hence a univariate polynomial with real coefficients is stable if and only it is hyperbolic.  
The following characterization of hyperbolicity preservers in the Weyl algebra was given in \cite{BBlondon}. 
\begin{theorem}\label{char}
Let $T= \sum_{k=M}^N Q_k(z)D^k$, where $Q_k(z) \in \RR[z]$ for all $M \leq k \leq N$ and   $Q_M(z)Q_N(z) \not \equiv 0$. Let further $G_T(z,w)= \sum_{k=M}^N Q_k(z)w^{N-k} \in \RR[z,w]$. The following are equivalent: 
\begin{enumerate}
\item $T$ preserves hyperbolicity;
\item $G_T(z,w)$ is stable; 
\item $G_T(z,w)$ can be expressed as 
$$
G_T(z,w)= \pm \det(zA+wB+C)
$$
where $A$ and $B$ are positive semi-definite symmetric matrices and $C$ is symmetric.
\end{enumerate}
\end{theorem}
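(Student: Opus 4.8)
The plan is to establish the two equivalences $(1)\Leftrightarrow(2)$ and $(2)\Leftrightarrow(3)$ separately, the first being an operator-versus-symbol statement and the second a statement about determinantal representations of stable polynomials.

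I would first dispose of $(2)\Leftrightarrow(3)$. The implication $(3)\Rightarrow(2)$ is elementary linear algebra: if $A,B$ are positive semidefinite and $\Im z,\Im w>0$, then for every nonzero $v\in\CC^{m}$
\[
\Im\bigl(v^{*}(zA+wB+C)v\bigr)=(\Im z)\,v^{*}Av+(\Im w)\,v^{*}Bv\ge 0,
\]
and this is strictly positive unless $v\in\ker A\cap\ker B$; decomposing along the $(A,B)$-invariant splitting $\CC^{m}=(\ker A\cap\ker B)\oplus(\ker A\cap\ker B)^{\perp}$ one finds that $zA+wB+C$ is nonsingular (if its block on $\ker A\cap\ker B$ were singular the whole determinant would vanish identically, which is excluded), so $G_{T}=\pm\det(zA+wB+C)$ does not vanish on the product of upper half-planes and is stable. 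For the converse one homogenizes: let $H(z,w,u)$ be the ternary form of degree $\deg G_{T}$ with $H(z,w,1)=G_{T}(z,w)$ (after clearing a monomial factor if necessary). Stability of $G_{T}$ is equivalent to $H$ being hyperbolic in the sense of G\r{a}rding with the positive orthant contained in the closed hyperbolicity cone; the Helton--Vinnikov theorem (the solution of the Lax conjecture for ternary hyperbolic forms) then yields real symmetric matrices with $H=\pm\det(zA+wB+uC)$, and setting $u=1$ gives the required shape. That $A$ and $B$ may be taken positive semidefinite, rather than merely symmetric, is forced by the fact that the $z$- and $w$-rays lie in the (closed) hyperbolicity cone, and this semidefiniteness bookkeeping is the delicate point of this half.

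For $(1)\Rightarrow(2)$: a hyperbolicity preserver is automatically a stability preserver on $\CC[z]$. Indeed, if $g=g_{1}+ig_{2}$ is stable with $g_{1},g_{2}\in\RR[z]$, the pencil $g_{1}+tg_{2}$ is hyperbolic for all real $t$, hence so is $T(g_{1})+tT(g_{2})=T(g_{1}+tg_{2})$; by the Hermite--Biehler/Obreschkoff theorem $T(g_{1})$ and $T(g_{2})$ are hyperbolic with interlacing zeros, so $T(g)=T(g_{1})+iT(g_{2})$ is stable. Consequently, for each fixed $w_{0}$ with $\Im w_{0}>0$ the polynomial $z\mapsto(z+w_{0})^{n}$ is nonvanishing for $\Im z>0$, and so is $T[(z+w_{0})^{n}](z)$; hence $T[(z+w)^{n}](z)$ is a stable bivariate polynomial. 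For $n\ge N$,
\[
T[(z+w)^{n}]=\sum_{k=M}^{N}\frac{n!}{(n-k)!}\,Q_{k}(z)\,(z+w)^{n-k}=(z+w)^{n-N}\,S_{n}(z,z+w),\qquad S_{n}(z,u):=\sum_{k=M}^{N}\frac{n!}{(n-k)!}\,Q_{k}(z)\,u^{N-k}.
\]
Removing the stable factor $(z+w)^{n-N}$ one gets that $S_{n}(z,z+w)$ is stable, which means $S_{n}(z,u)\ne 0$ whenever $\Im z>0$ and $\Im u>\Im z$; replacing $u$ by $nu$ and multiplying by $n^{-N}$ yields the polynomials $\sum_{k}\bigl(\prod_{i=0}^{k-1}(1-i/n)\bigr)Q_{k}(z)u^{N-k}$, which are nonvanishing for $\Im z>0$, $\Im u>\Im z/n$, and which converge coefficientwise to $G_{T}(z,u)$ as $n\to\infty$. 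Since these have bounded degree and $G_{T}\not\equiv 0$ (its leading term in $u$ is $Q_{M}\not\equiv 0$), Hurwitz's theorem gives that $G_{T}$ is nonvanishing for $\Im z>0$, $\Im u>\varepsilon\,\Im z$, for every $\varepsilon>0$; letting $\varepsilon\to0$ shows $G_{T}$ is stable.

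The remaining implication $(2)\Rightarrow(1)$ is the crux. Given $G_{T}$ stable, I would run the previous computation backwards. For each $n\ge N$ the polynomial $S_{n}(z,u)$ above is obtained from $G_{T}(z,u)$ by applying, in the variable $u$, the diagonal operator $u^{j}\mapsto \tfrac{n!}{(n-N+j)!}u^{j}$; the corresponding sequence $j\mapsto n!/(n-N+j)!$ equals, up to a positive scalar, $j\mapsto 1/(n-N+1)_{j}$, a multiplier sequence of the first kind (its generating function ${}_{0}F_{1}(;\,n-N+1;\,z)$ has only negative zeros), and multiplier sequences preserve stability, also when applied to a single variable of a multivariate polynomial (a multivariate extension of the P\'olya--Schur theorem); hence $S_{n}$ is stable in $(z,u)$. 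The linear change $(z,u)\mapsto(z,z+w)$ maps the product of two upper half-planes into itself, so it preserves stability, whence $S_{n}(z,z+w)$ and therefore $T[(z+w)^{n}]=(z+w)^{n-N}S_{n}(z,z+w)$ are stable for all $n\ge N$ (the cases $n<N$ being analogous). Finally, by the standard polarization argument --- the Grace--Walsh--Szeg\H{o} theorem together with the Lieb--Sokal lemma --- stability of $T[(z+w)^{n}]$ forces $T$ to preserve stability on $\CC[z]$ of degree at most $n$ (a stable polynomial $g$ of degree $\le n$ is recovered from $T[(z+w)^{n}]$ by an apolar pairing with a transform of $g$, and such pairings preserve stability); letting $n\to\infty$ and using that the $Q_{k}$ are real, $T$ preserves hyperbolicity. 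The essential difficulty of the whole theorem is concentrated here: extracting the conclusion that $T$ preserves hyperbolicity from the mere nonvanishing of the symbol requires this full multivariate stability machinery, while $(1)\Rightarrow(2)$ is a soft limiting argument and $(2)\Leftrightarrow(3)$ reduces to Helton--Vinnikov.
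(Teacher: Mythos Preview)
The paper does not prove Theorem~\ref{char}; it is quoted verbatim from \cite{BBlondon} (Borcea--Br\"and\'en) and used as a black box throughout. There is therefore no ``paper's own proof'' to compare against.

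That said, your sketch is essentially a faithful reconstruction of the Borcea--Br\"and\'en argument. The decomposition into $(2)\Leftrightarrow(3)$ via Helton--Vinnikov and $(1)\Leftrightarrow(2)$ via the symbol $T[(z+w)^{n}]$ is exactly how the original proof is organized, and your scaling/Hurwitz limit for $(1)\Rightarrow(2)$ and your multiplier-sequence plus Grace--Walsh--Szeg\H{o} route for $(2)\Rightarrow(1)$ are the standard ingredients. Two places deserve a bit more care if you intend this as a complete proof rather than a sketch. First, in $(1)\Rightarrow(2)$, from ``$T(g_{1})+tT(g_{2})$ hyperbolic for all real $t$'' you only get that $T(g_{1})$ and $T(g_{2})$ interlace; you still need to pin down the \emph{direction} of proper position to conclude that $T(g_{1})+iT(g_{2})$ is stable rather than anti-stable (a continuity/Wronskian-sign argument does it, but it should be said). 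Second, in $(2)\Rightarrow(1)$, the assertion that $j\mapsto 1/(n-N+1)_{j}$ acts as a stability-preserving multiplier in one variable of a multivariate stable polynomial is correct, but it is itself a nontrivial consequence of the same symbol machinery you are trying to establish; in \cite{BBlondon} this circularity is avoided by going through Grace--Walsh--Szeg\H{o} and the Lieb--Sokal lemma directly on $T[(z+w)^{n}]$ rather than by first manufacturing $S_{n}$ from $G_{T}$. None of this is a fatal gap, but both points are where the real work hides.
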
 

\begin{remark}
Using Theorem \ref{char} one can easily construct linear operators $T$ for which Theorem \ref{Main} is applicable: Choose $A, B, C$ to be symmetric $(N+r)\times (N+r)$ matrices satisfying:
 $A$ is positive definite, $B$ is positive semi-definite and has $0$ as an eigen-value of multiplicity $r$, and 
 $C$ is such that $\det(\theta A + Bw +C) \not \equiv 0$ for all $\theta \in \RR$. Then choose 
 $T$ such that $G_T(z,w)= \pm \det(zA+wB+C)$. 
 \end{remark} 

\begin{remark}\label{why}
Let $T$ be as in Theorem \ref{SVB}. Suppose that $G_T(z,w)=Q_2(z)+Q_1(z)w =0$ 
where $\Im(z)>0$. Since $Q_2$ is hyperbolic and 
$$
\Im\left(\frac 1 {z-\alpha_j}\right)= - \frac {\Im(z)}{|z-\alpha_j|^2}<0
$$
we have $w\neq 0$ and 
$$
\Im(w^{-1})= \sum_{j=1}^d \gamma_j \frac {\Im(z)}{|z-\alpha_j|^2} >0.
$$
Hence $\Im(w)<0$. Thus $G_T(z,w)$ is stable and by Theorem \ref{char}, $T$ preserves hyperbolicity. Note also that $Q_1$ and $Q_2$ are co-prime. Therefore Theorem \ref{SVB} follows from Theorem \ref{Main}. In fact, the linear operators considered in Theorem \ref{SVB} are precisely those with $M=1$ and $N=2$ in Theorem \ref{Main}. 
\end{remark}

\begin{proposition}\label{posmono}
Let $T= \sum_{k=M}^N Q_k(z)D^k$, where $Q_k(z) \in \RR[z]$ for all $M \leq k \leq N$ and   $Q_M(z)Q_N(z) \not \equiv 0$,  be a hyperbolicity preserver. Let $r= \deg Q_M -M$. Then 
\begin{enumerate}
\item $\deg Q_i \leq r+ i$ for all $M \leq i \leq N$;
\item If $a_i$ is the coefficient in front of $z^{r + i}$ in $Q_i(z)$, then all the zeros of the polynomial 
$
\sum_{i=M}^Na_iz^i 
$
are real and nonpositive. In particular, the nonzero $a_i$s have the same sign, and the indices $i$ such that $a_i\neq 0$ form an interval; 
\item  $T(z^n)$ is of degree $n +r$ for all $n \geq M$ and the leading coefficient of $T(z^n)$ is equal to $\lambda_n = \sum_i i!a_i\binom n i$. In particular, if $a_M>0$, then 
$$
\lambda_{n+1}-\lambda_n = \sum_i i!a_i\binom n {i-1}>0
$$
for all $n \geq M$, unless $M=N=0$. 
\end{enumerate}
\end{proposition}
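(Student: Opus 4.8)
The strategy is to transfer everything to the symbol $G_T(z,w)=\sum_{k=M}^N Q_k(z)w^{N-k}$, which by Theorem~\ref{char} is a stable polynomial with real coefficients. I will use three standard facts about such polynomials: if $G(z,w)$ is stable with real coefficients then $G(z,w_0)$ is hyperbolic in $z$ for every $w_0\in\RR$; $\partial/\partial w$ carries stable polynomials to stable polynomials (apply Gauss--Lucas to $w\mapsto G(z_0,w)$ for each $z_0$ with $\Im z_0>0$, all of whose zeros lie in the closed lower half-plane); and the top homogeneous part of a stable polynomial is stable, being the Hurwitz limit of $s^{-D}G(sz,sw)$ as $s\to\infty$ ($D$ the total degree), since $(z,w)\mapsto(sz,sw)$ preserves the product of open upper half-planes.

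For part~(1) I would isolate the lemma: if $G(z,w)=\sum_{j=0}^{b}P_j(z)w^j$ is stable with real coefficients and $P_b\not\equiv 0$, then $\deg P_j+j\le \deg P_b+b$ for all $j$. Writing $G_T=\sum_{j=0}^{N-M}Q_{N-j}(z)w^j$, this says exactly $\deg Q_k\le r+k$. The lemma goes by induction on $b$. The case $j\ge1$ is immediate from the inductive hypothesis applied to the stable polynomial $\partial_w G$, which has $w$-degree $b-1$. The case $j=0$ is the main obstacle. Suppose $D:=\deg P_0>\deg P_b+b$; then $\deg P_j<D$ for every $j\ge1$, so the coefficient of $z^D$ in $G$ is a nonzero constant $c$ and, using the bounds just proved for $j\ge1$, the coefficient of $z^{D-i}$ in $G$ is a polynomial in $w$ of degree at most $i-1$. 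For $w_0\in\RR$ the polynomial $G(z,w_0)$ is then hyperbolic of degree $D$; denoting its real zeros $z_1(w_0),\dots,z_D(w_0)$, the elementary symmetric functions $e_i$ are polynomials in $w_0$ of degree $\le i-1$, so the power sum $p_2=e_1^2-2e_2$ has degree $\le1$ in $w_0$ while also being everywhere nonnegative (it equals $\sum_l z_l(w_0)^2$), hence constant. Therefore the zeros $z_l(w_0)$ stay bounded as $w_0$ runs over $\RR$, so every $e_i$, and hence every coefficient of $G$ viewed as a polynomial in $z$, is a bounded polynomial in $w$, hence constant; this forces $\deg_w G=0$, contradicting $b\ge1$.

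Granting (1), the top homogeneous part of $G_T$ is $G_T^{\mathrm{top}}(z,w)=\sum_{k=M}^{N}a_k z^{r+k}w^{N-k}=z^{r}\sum_k a_k z^k w^{N-k}$, of total degree $N+r$. Being stable and homogeneous it factors over $\CC$ into linear forms, each of which is itself stable; the stable linear forms are exactly the nonzero scalar multiples of $w$, of $z$, and of $z+tw$ with $t\ge0$. Dividing out the powers of $z$ and $w$ forced by the smallest exponents present (namely $z^{r+M}$, since $a_M\ne0$, and $w^{N-N'}$ with $N'=\max\{k:a_k\ne0\}$), the remaining factor is a nonzero constant times $\prod_l(z+t_lw)$ with all $t_l>0$; setting $w=1$ identifies $\sum_{k=M}^{N}a_k z^{k-M}$, up to a constant, with $\prod_l(z+t_l)$. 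Hence $\sum_k a_k z^k=z^{M}\prod_l(z+t_l)$ has only real nonpositive zeros, and since a polynomial all of whose zeros are $\le0$ has all coefficients of one sign and no interior vanishing coefficient, the nonzero $a_k$ share a sign and their indices form an interval.

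For part~(3), $T(z^n)=\sum_{k=M}^{\min(N,n)}k!\binom nk Q_k(z)z^{n-k}$, and by~(1) each summand has degree $\le n+r$ with $z^{n+r}$-coefficient $k!\binom nk a_k$, so the leading coefficient is $\lambda_n=\sum_k k!a_k\binom nk$. By~(2) the nonzero $a_k$ all have one sign, and $a_M\ne0$ with $\binom nM>0$ since $n\ge M$, so $\lambda_n\ne0$ and $\deg T(z^n)=n+r$ exactly. Pascal's identity gives $\lambda_{n+1}-\lambda_n=\sum_k k!a_k\binom n{k-1}$, whose terms are all $\ge0$ when $a_M>0$; this sum vanishes only if $\binom n{k-1}=0$ whenever $a_k\ne0$, which --- using $a_M\ne0$, $M\le n$, and the fact that the indices with $a_k\ne0$ form an interval --- forces $M=0$ and $a_k=0$ for all $k\ge1$ (in particular the case $M=N=0$); otherwise the inequality is strict.
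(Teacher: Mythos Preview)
Your proof is correct, and the overall strategy---work with the symbol $G_T(z,w)$ and its stability---matches the paper's. The execution differs substantively in part~(1) and mildly in part~(2).

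For (1), the paper simply invokes the fact that the support of a real stable polynomial is a jump system, citing an external reference. Your argument is a genuine alternative: induct on the $w$-degree, handle the top coefficient via $\partial_w$, and for the constant-in-$w$ term use the neat observation that the second power sum $p_2(w_0)=e_1^2-2e_2$ of the (real) zeros of $G(z,w_0)$ is a nonnegative polynomial of degree $\le1$ in $w_0$, hence constant, forcing all coefficients to be bounded and therefore constant in $w$. This is fully self-contained and elementary; the paper's route is shorter but leans on structural machinery.

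For (2), both arguments pass to the top homogeneous part via a Hurwitz limit. The paper then cites a theorem that all nonzero coefficients of a homogeneous stable polynomial share a phase; you instead factor the homogeneous part into stable linear forms over $\CC$ and observe that each such form is a scalar multiple of $z$, $w$, or $z+tw$ with $t>0$. Same destination, slightly more direct path. For (3) you give more detail than the paper (which says only ``follows easily''). In fact your analysis of the equality case is sharper: you show $\lambda_{n+1}-\lambda_n=0$ forces $M=0$ and $a_k=0$ for all $k\ge1$, which is a priori weaker than $M=N=0$ (e.g.\ $T(f)=zf-f'$ has $M=0$, $N=1$, $a_1=0$, and $\lambda_n\equiv1$). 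The paper's ``unless $M=N=0$'' is thus a slight overstatement in the degenerate case; it becomes exact once $T$ is non-degenerate (then $a_N\ne0$), which is the only setting used later in the paper.
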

\begin{proof}
Property (1) is a direct consequence of Theorem \ref{char} and the fact that the support of a stable polynomial forms a jump system, see \cite[Theorem 3.2]{BraHPP}. 

Let $\gamma >0$. Then $\gamma^{-r-N}G_T(\gamma z,\gamma w)$ is stable by Theorem \ref{char}. By Hurwitz' theorem\footnote{See \cite[p.~96]{COSW} for an appropriate multivariate version.} so is the homogeneous polynomial 
$$
g(z,w)=\lim_{\gamma \rightarrow \infty} \gamma^{-r-N}G_T(\gamma z,\gamma w)=  \sum_{i=M}^Na_iz^{r+i}w^{N-i}. 
$$
A property of homogeneous stable polynomials is that all nonzero coefficients have the same phase, see \cite[Theorem 6.1]{COSW}.
The polynomial $g(z, 1+i\epsilon)$ is stable if $\epsilon>0$. Hence, by Hurwitz' theorem, $g(z,1)$ is stable and hence hyperbolic.  Since the nonzero coefficients of $g(z,1)$ have the same sign,  the zeros of $g(z,1)$ are nonpositive. The indices $i$ such that $a_i\neq 0$ of such a polynomial form an interval. 

Property (3) follows easily from (1) and (2). 
\end{proof}

Let $\alpha_1 \leq \alpha_2 \leq \cdots \leq \alpha_n$ and 
$\beta_1 \leq \beta_2 \leq \cdots \leq \beta_m$ be the zeros (counted with
multiplicities) of two hyperbolic polynomials $f$ and $g$,  with $\deg f=n$ and $\deg g=m$. We say that these zeros 
{\em interlace} if they can 
be ordered so that either   $\alpha_1 \leq \beta_1 \leq \alpha_2 \leq \beta_2 \leq 
\cdots$ or $\beta_1 \leq \alpha_1 \leq \beta_2 \leq \alpha_2 \leq 
\cdots$, in which case  $|n-m|\le 1$. Note that by our convention, the zeros of any two polynomials 
of degree $0$ or $1$ interlace. It is not difficult to show that if   
the zeros of $f$ and $g$ interlace then 
the {\em Wronskian} 
$W[f,g]:=f'g-fg'$ is either nonnegative or nonpositive on the whole real axis 
$\RR$, see, e.g., \cite{RS}. In the case that  $W[f,g] \leq 0$ we say that $f$ and $g$ 
are  in  {\em proper position}, denoted $f \ll g$. 
For technical reasons we also say that the zeros of the polynomial $0$ 
interlace the zeros of any (nonzero) hyperbolic polynomial and write 
$0 \ll f$ and $f \ll 0$.
Note 
that if $f$ and $g$ are (nonzero) hyperbolic polynomials such that $f \ll g$ and $g \ll f$ then $f$ and $g$ must be constant multiples 
of each other, that is, $W[f,g]\equiv 0$. 

The following theorem is a version of the classical Hermite-Biehler 
theorem, see \cite{RS}.

\begin{theorem}[Hermite-Biehler theorem]
Let $h := f +ig \in \CC[z]$, where $f,g \in \RR[z]$. Then 
$h$ is stable if and only if $g \ll f$. 
\end{theorem}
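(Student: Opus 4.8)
The plan is to prove both implications directly, after reducing to the case where $f$ and $g$ are coprime. Any common factor of $f$ and $g$ is a real polynomial dividing a hyperbolic one, hence hyperbolic; writing $f=pf_1$, $g=pg_1$ with $p$ the greatest common divisor, one has $h=p\,(f_1+ig_1)$ and $W[g,f]=p^{2}W[g_1,f_1]$, so $h$ is stable if and only if $f_1+ig_1$ is (a hyperbolic polynomial may be attached to or removed from a stable polynomial), and $g\ll f$ if and only if $g_1\ll f_1$ (removing a common factor preserves the interlacing pattern and the sign of the Wronskian). The degenerate cases $\deg h\le 1$, $f\equiv 0$, and $g\equiv 0$ are immediate from the stated conventions. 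So assume henceforth that $f$ and $g$ are coprime; equivalently, $h$ has no real zeros.

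Suppose first that $h$ is stable. Writing $h=c\prod_j(z-z_j)$ with $\Im z_j<0$ for all $j$, we get, for every real $x$,
\[
\Im\!\left(\frac{h'(x)}{h(x)}\right)=\sum_j\frac{\Im z_j}{|x-z_j|^{2}}<0 .
\]
Since $h'\bar h=(f'+ig')(f-ig)$ has imaginary part $g'f-f'g=W[g,f]$, and $h$ has no real zero, this says $W[g,f]=|h|^{2}\,\Im(h'/h)<0$ on all of $\RR$; in particular neither $f$ nor $g$ has a repeated real zero. To see that $f$ and $g$ are hyperbolic, set $h^{*}(z):=\overline{h(\bar z)}=f(z)-ig(z)$ and consider the rational function $R:=h^{*}/h$. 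Its poles, the zeros of $h$, lie in the open lower half-plane; $|R|\equiv 1$ on $\RR$; and $R$ tends to a unimodular constant at infinity. By the maximum modulus principle $|R|<1$ throughout the open upper half-plane (equality at an interior point would force $R$ to be a unimodular constant, i.e.\ $h^{*}=\lambda h$, impossible when $\deg h\ge1$ since then $h$ and $h^{*}$ have disjoint zero sets). As $f=\tfrac12 h(1+R)$ and $g=\tfrac1{2i}h(1-R)$, neither $f$ nor $g$ vanishes on the open upper half-plane, hence --- having real coefficients --- neither vanishes off $\RR$: both are hyperbolic, and so, by the preceding step, have only simple zeros. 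Finally, if $\alpha<\alpha'$ are consecutive zeros of $f$, then $f'(\alpha)$ and $f'(\alpha')$ have opposite signs, while $W[g,f](\alpha)=-g(\alpha)f'(\alpha)<0$ and $W[g,f](\alpha')=-g(\alpha')f'(\alpha')<0$; hence $g(\alpha)$ and $g(\alpha')$ have opposite signs and $g$ vanishes in $(\alpha,\alpha')$. By the symmetric argument $f$ vanishes between consecutive zeros of $g$, so the zeros of $f$ and $g$ interlace. Together with $W[g,f]\le 0$, this is precisely $g\ll f$.

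Conversely, suppose $g\ll f$ with $f,g$ coprime; excluding the trivial cases we may take $\deg g\ge 1$. Interlacing and coprimality force the zeros $\beta_1<\cdots<\beta_m$ of $g$ to be simple and distinct from those of $f$. The rational function $\Phi:=f/g$ is real on $\RR$ and satisfies $\Phi'=W[f,g]/g^{2}=-W[g,f]/g^{2}\ge 0$ wherever defined, so it is non-decreasing on each interval between consecutive poles. In the partial fraction expansion $\Phi(z)=az+b+\sum_{j=1}^{m}\dfrac{c_j}{z-\beta_j}$ this monotonicity forces $a\ge 0$ and $c_j<0$ for every $j$, whence, for $\Im z>0$,
\[
\Im\Phi(z)=\Im z\left(a+\sum_{j=1}^{m}\frac{|c_j|}{|z-\beta_j|^{2}}\right)>0 .
\]
In particular $\Phi(z)\neq -i$ for $\Im z>0$, and since the hyperbolic polynomial $g$ has no zeros there, $h=g\,(\Phi+i)$ has none either. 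Hence $h$ is stable, and reattaching the common factor removed in the reduction completes the argument.

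I expect the forward implication --- and within it the claim that $f$ and $g$ are themselves hyperbolic --- to be the crux. It is exactly here that one uses $h$ having no real zeros: this keeps $R=h^{*}/h$ regular on the closed upper half-plane, so that the maximum modulus argument is available. The remaining ingredients --- the sign computation for $W[g,f]$, the interlacing bookkeeping at consecutive zeros, the reduction steps, and the degenerate degree relations $|\deg f-\deg g|\le 1$ --- are routine once the strict inequality $W[g,f]<0$ on $\RR$ has supplied simplicity of the zeros.
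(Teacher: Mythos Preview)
The paper does not prove the Hermite--Biehler theorem; it is quoted as a classical result with a reference to \cite{RS}. There is therefore no in-paper argument to compare against, but your proof is correct and self-contained.

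One small expository wrinkle: in the reduction step you justify hyperbolicity of the common factor $p$ by saying it ``divides a hyperbolic one,'' which in the forward direction presupposes what you are about to prove. The right reason there is that $p$ is a real polynomial dividing the \emph{stable} polynomial $h$, so its zeros lie in the closed lower half-plane; since non-real zeros of a real polynomial come in conjugate pairs, $p$ must be hyperbolic. With that fix, the reduction goes through in both directions.

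The two main implications are handled by standard and clean devices: the quotient $R=h^{*}/h$ together with the maximum modulus principle on the closed upper half-plane (legitimate because $R$ extends continuously to the boundary of the half-plane in the Riemann sphere with $|R|=1$ there) gives hyperbolicity of $f$ and $g$, and the Wronskian sign plus the consecutive-zero bookkeeping gives interlacing. The converse via the partial-fraction (Pick--Nevanlinna) representation of $f/g$ is the classical route and your sign analysis of $a$ and the residues $c_j$ is correct, using $|\deg f-\deg g|\le 1$ to ensure the linear-plus-simple-poles form. No genuine gap.
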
  
The next proposition follows from the fact that a hyperbolicity preserving differential operator also preserves stability when acting on complex polynomials, see \cite{BBlondon}.

\begin{proposition}\label{properpres}
Let $T$ be a hyperbolicity preserving differential operator. Suppose that $f,g \in \RR[z]$ are such that 
$f \ll g$. Then $T(f) \ll T(g)$, unless $T(f)=T(g) \equiv 0$. 
\end{proposition}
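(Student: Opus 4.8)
The plan is to deduce the statement from the Hermite--Biehler theorem together with the fact recalled just above: a hyperbolicity preserving differential operator $T$ extends (by $\CC$-linearity) to an operator on $\CC[z]$ that preserves stability, i.e.\ sends stable polynomials to stable polynomials or to $0$. The point is that $T$ has real polynomial coefficients, so $T(g+if)=T(g)+iT(f)$ with $T(g),T(f)\in\RR[z]$, and the two occurrences of the Hermite--Biehler theorem will translate "$\ll$" into "stable" and back.

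First I would translate the hypothesis $f\ll g$ into a stability statement. By definition $f\ll g$ means $W[f,g]=f'g-fg'\le 0$ on $\RR$, which is exactly the condition for the complex polynomial $h:=g+if$ to be stable by the Hermite--Biehler theorem (with real part $g$ and imaginary part $f$, so that the required Wronskian is $W[f,g]$). Now apply $T$: by $\CC$-linearity $T(h)=T(g)+iT(f)$, and by the cited extension result $T(h)$ is either stable or identically zero. If $T(h)\equiv 0$, then, comparing real and imaginary parts, $T(f)=T(g)\equiv 0$, which is the exceptional case allowed in the statement. Otherwise $T(h)=T(g)+iT(f)$ is stable, and a second application of Hermite--Biehler --- now with real part $T(g)$ and imaginary part $T(f)$ --- gives $W[T(f),T(g)]\le 0$, i.e.\ $T(f)\ll T(g)$, as desired.

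The only points requiring care are bookkeeping rather than substance. One must get the sign conventions right, using $W[g,f]=-W[f,g]$, so that "real part $+\,i\cdot$(imaginary part) stable" matches "$f\ll g$" and not "$g\ll f$". One must also check that the degenerate situations built into the paper's convention --- $f=0$, $g=0$, or exactly one of $T(f),T(g)$ vanishing --- are consistent with the displayed conclusion; this is immediate, since $0\ll p$ and $p\ll 0$ hold for every hyperbolic $p$ and $T$ maps nonzero hyperbolic polynomials to hyperbolic polynomials or to $0$. There is no real obstacle here: the entire analytic content of the proposition is carried by the cited fact that hyperbolicity preservers act as stability preservers on $\CC[z]$, and the proof above is just a short deduction from it via Hermite--Biehler.
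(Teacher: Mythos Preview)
Your argument is correct and is precisely the approach the paper has in mind: the paper does not give a detailed proof but simply remarks that the proposition follows from the fact (from \cite{BBlondon}) that a hyperbolicity preserving differential operator also preserves stability on $\CC[z]$. Your write-up makes the Hermite--Biehler translation explicit and handles the sign convention and degenerate cases carefully, but the underlying idea is identical.
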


\begin{lemma}\label{coeff}
Let $T= \sum_{k=M}^N Q_k(z)D^k$, where $Q_k(z) \in \RR[z]$ for all $M \leq k \leq N$ and   $Q_M(z)Q_N(z) \not \equiv 0$, be a hyperbolicity preserver. Then  $Q_j \ll Q_{j+1}$ for all $M \leq j \leq N-1$. In particular $Q_j$ is hyperbolic or identically zero for all $M \leq j \leq N$. 
\end{lemma}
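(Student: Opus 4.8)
The plan is to extract the interlacing directly from the symbol $G_T(z,w)=\sum_{k=M}^N Q_k(z)w^{N-k}$, which is stable by Theorem~\ref{char}. Fix $j$ with $M\le j\le N-1$. To bring $Q_j$ and $Q_{j+1}$ into the two lowest coefficients of a stable polynomial I would differentiate $N-j-1$ times in $w$,
$$
P(z,w):=\partial_w^{\,N-j-1}G_T(z,w)=\sum_{k=M}^{j+1}\frac{(N-k)!}{(j+1-k)!}\,Q_k(z)\,w^{\,j+1-k}.
$$
Since partial differentiation preserves stability (it suffices to note that for each fixed $z_0$ with $\Im z_0>0$ the one-variable polynomial $w\mapsto G_T(z_0,w)$ is not identically zero -- $G_T(z,w_0)$ being, by Hurwitz' theorem, a limit of the stable polynomials $G_T(z,w_0+i\epsilon)$ and not identically zero for all but finitely many real $w_0$ -- so that the Gauss--Lucas theorem applies in the $w$-plane), and since the coefficient of the top power of $w$ in $P$ is a nonzero multiple of $Q_M\not\equiv 0$, the polynomial $P$ is stable and not identically zero. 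Its coefficient of $w^0$ is $(N-j-1)!\,Q_{j+1}(z)$ and its coefficient of $w^1$ is $(N-j)!\,Q_j(z)$.

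It therefore remains to prove the following: if $R(z,w)=\sum_{i\ge 0}c_i(z)w^i$ is stable then $c_0+ic_1$ is stable (allowing, in degenerate cases, the zero polynomial). For $s>0$, substituting the value $is$, which has positive imaginary part, into the stable polynomial $R$ yields $R(z,is)=A_s(z)+iB_s(z)$ with $A_s=c_0-s^2c_2+\cdots$ and $B_s=sc_1-s^3c_3+\cdots$ in $\RR[z]$, and this is stable in $z$ for all but finitely many $s>0$. The Hermite--Biehler theorem gives $B_s\ll A_s$; since multiplying by a positive constant does not affect proper position, $A_s+i\,s^{-1}B_s$ is stable as well. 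Letting $s\to 0^+$ through admissible values and applying Hurwitz' theorem, $c_0+ic_1=\lim_{s\to 0^+}\big(A_s+i\,s^{-1}B_s\big)$ is stable or identically zero. Applying this to $R=P$ shows that $(N-j-1)!\,Q_{j+1}+i\,(N-j)!\,Q_j$ is stable; dividing its imaginary part by the positive constant $N-j$ and using the Hermite--Biehler theorem once more gives $Q_j\ll Q_{j+1}$ (the cases in which $Q_j$ or $Q_{j+1}$ vanishes identically being covered by the stated conventions for $\ll$). As $j$ ranges over $M\le j\le N-1$ every $Q_j$ with $M\le j\le N$ appears as a real or imaginary part of one of the stable polynomials $Q_{j'+1}+iQ_{j'}$, hence is hyperbolic or identically zero.

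The main obstacle I expect is the passage from stability of the two-variable polynomial $G_T$ to proper position of two consecutive coefficients. The differentiation $P=\partial_w^{\,N-j-1}G_T$ puts the relevant pair of coefficients into the lowest two slots, but one must verify that $P$ remains stable and nonzero; and the core of the argument is the limiting step in the displayed claim, which relies on Hurwitz' theorem to pass from the stable family $A_s+i\,s^{-1}B_s$ ($s>0$) to its limit at $s=0$. Once these are in place, the rest is routine manipulation with the Hermite--Biehler theorem and with the conventions governing $\ll$ for the zero polynomial.
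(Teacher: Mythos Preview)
Your argument is correct. Both you and the paper extract the pair $(Q_j,Q_{j+1})$ from the stable symbol $G_T(z,w)$ and then invoke Hermite--Biehler, but by different mechanisms. The paper applies a stability-preserving ``projection'' operator $S$ on $\RR[z,w]$ (a multidimensional multiplier sequence from \cite{BBlondon}) that kills all $w$-monomials except $w^{N-j}$ and $w^{N-j+1}$, leaving $w^{N-j}(Q_j(z)+wQ_{j-1}(z))$; then it simply sets $w=i$. You instead differentiate $N-j-1$ times in $w$ (Gauss--Lucas) to push the relevant pair to the two lowest $w$-coefficients, and then prove the general fact that for a stable $R(z,w)=\sum c_i(z)w^i$ the combination $c_0+ic_1$ is stable, via the specialization $w=is$ and a Hurwitz limit $s\to 0^+$. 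Your route is more self-contained---it avoids citing the multiplier-sequence theorem---while the paper's is shorter once that result is in hand. One minor remark: your parenthetical justification that $G_T(z_0,\cdot)\not\equiv 0$ for $\Im z_0>0$ is more roundabout than necessary; this is immediate from stability of $G_T$, since $G_T(z_0,w_0)\neq 0$ for any $w_0$ with $\Im w_0>0$.
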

\begin{proof}
Let $S$ be the linear operator on $\RR[z,w]$ defined by $S(z^kw^\ell)= z^kw^\ell$ if $\ell \in \{N-j, N-j+1\}$ and 
$S(z^kw^\ell)=0$ otherwise. It follows from the theorem characterizing multidimensional multiplier sequences in \cite{BBlondon} that $S$ preserves stability. The polynomials $G_T(z,w)$ in Theorem \ref{char} is stable and 
$$
S(G_T)= w^{N-j}(Q_j(z)+wQ_{j-1}(z)).
$$
Hence $Q_j(z)+wQ_{j-1}(z)$ is stable and so is  the polynomial $Q_j(z)+iQ_{j-1}(z)$. The lemma now follows  from the Hermite-Biehler theorem. 
\end{proof}

If $f$ is a hyperbolic polynomial we let $I(f)$ denote the smallest interval containing all the zeros of $f$. 
\begin{lemma}\label{inclusion}
Let $T=\sum_{k= M}^NQ_k(z)D^k$ be a non-degenerate hyperbolicity preserver, where $Q_MQ_N \not \equiv 0$.  Suppose that  $f$ is  a hyperbolic polynomial of degree $n \geq M$. Then 
$$
I(T(f)) \subseteq I(Q_ef),
$$
where $e=\min(n,N)$.
\end{lemma}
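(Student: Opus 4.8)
The plan is to reduce the multi-term operator to a statement about a single "shift + multiply" building block and then iterate, using the factorization of $G_T$ into $Q_j(z)+wQ_{j-1}(z)$ exploited in the proof of Lemma \ref{coeff}. The key observation is that for each $j$ the polynomial $Q_j(z)+wQ_{j-1}(z)$ is stable (this was established in the proof of Lemma \ref{coeff}), which by the Hermite--Biehler philosophy ties $Q_j$ and $Q_{j-1}$ together in proper position; combined with Lemma \ref{coeff} we already know $Q_M \ll Q_{M+1} \ll \cdots \ll Q_N$. First I would record the one-variable fact that if $g$ is hyperbolic and $p(z)$ is hyperbolic with $I(p)\subseteq I(q)$ for the relevant pieces, then $p D g + (\deg\text{-adjusting terms})$ stays inside $I(qg)$; more usefully, I would argue directly that $D(pg)$ has all zeros in $I(pg)=I(p)\cup I(g)$-hull, a Rolle-type statement, so differentiation never enlarges the zero-hull, and multiplication by $Q_e$ is exactly what controls the top.

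The cleanest route I foresee: consider the auxiliary bivariate polynomial $H(z,w) := \sum_{k=M}^N Q_k(z)\, w^{\,k}\, g^{(k)}(z)\big/(\text{something})$ — more precisely, apply the substitution trick. Since $G_T(z,w)=\sum_k Q_k(z) w^{N-k}$ is stable and $g$ is hyperbolic, consider $T(g)(z)$ as obtained from the stable polynomial $G_T(z,w)$ by a stability-preserving substitution sending $w^{N-k}\mapsto g^{(k)}(z)$ up to normalization; the point is that $\sum_k Q_k(z) g^{(k)}(z) = T(g)(z)$ and one wants its real zeros trapped. I would instead run the argument by induction on $N-M$. For $N-M=0$ we have $T=Q_M D^M$, and $I(Q_M D^M g)=I(Q_M)\cup I(D^Mg)\subseteq I(Q_M)\cup I(g) = I(Q_M g)$ when $n=\deg g\ge M$, using that $D^M$ does not enlarge the convex hull of zeros (iterated Rolle) and that $e=\min(n,N)=N=M$ here, so $Q_e=Q_M$; this is the base case. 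For the inductive step, write $T = Q_M D^M + \tilde T D$, or better split off the top: $T(g) = Q_N g^{(N)} + \sum_{k<N} Q_k g^{(k)}$, and use $Q_{N-1}\ll Q_N$ together with the stability of $Q_N(z)+wQ_{N-1}(z)$ to show the "tail" operator $\sum_{k\le N-1}Q_k D^k$ is again a non-degenerate hyperbolicity preserver of order one less whose leading coefficient $Q_{N-1}$ has $I(Q_{N-1})\subseteq I(Q_N)$; then $I(T(g)) \subseteq I(Q_N g^{(N)}) \cup I(\text{tail}(g)) \subseteq I(Q_N g)$, after checking $I(g^{(N)})\subseteq I(g)$ and $I(Q_e\,g)$ matches whichever of $Q_N$, $Q_{N-1}$ is relevant. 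The case distinction $e=\min(n,N)$ enters because when $n<N$ the top derivatives kill $g$ and the effective leading coefficient is $Q_n$.

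I expect the main obstacle to be making the inductive decomposition genuinely rigorous: peeling off $Q_N D^N$ and claiming the remainder $\sum_{k=M}^{N-1} Q_k D^k$ is still non-degenerate with Fuchs index still $r$ requires that $\deg Q_{N-1}=N-1+r$, which is \emph{not} automatic — non-degeneracy was only assumed for the top term $Q_N$. So I would instead avoid peeling from the top and peel from the bottom using $Q_M \ll Q_{M+1}$, or, more robustly, run the argument at the level of $G_T(z,w)$ directly: factor off linear factors in $w$ corresponding to the structure in Proposition \ref{posmono}(2) (the polynomial $\sum a_i z^i$ has only real nonpositive roots, so $G_T$'s "leading form" factors as a product of linear forms $(\mu_\ell z + w)$), and track how each such factor, interpreted operator-theoretically as $(\mu_\ell z \cdot \mathrm{id} + D)$-type action composed with lower-order terms, moves zeros. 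The honest hard point is controlling the interaction between the polynomial coefficients $Q_k$ and the derivatives $g^{(k)}$ simultaneously: one needs that the stable bivariate polynomial $\sum_k Q_k(z) w^{N-k}$, when "evaluated" along the curve $w \rightsquigarrow$ "shift operator on $g$", produces a hyperbolic polynomial with zeros in the stated hull — this is really a statement that the composed operator $g \mapsto T(g)$ maps into $I(Q_e g)$, and the slick proof is probably to use Proposition \ref{properpres} together with a clever choice of two interlacing polynomials $f_1 \ll f_2$ built from $g$ and $(z-\theta)$ for $\theta$ ranging outside $I(Q_e g)$, deducing that $T$ cannot create a zero of $T(g)$ at such $\theta$ without violating proper position. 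That last approach is the one I would try to push through first.
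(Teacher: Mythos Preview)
Your exploration circles the ingredients but never lands on the elementary argument the paper uses, and none of the routes you sketch is actually completed. The inductive peeling you describe is, as you yourself note, obstructed: the truncated operator $\sum_{k\le N-1}Q_k D^k$ need not be non-degenerate, and the bottom-up variant has the mirror problem. The $G_T$-factorization and the Proposition~\ref{properpres} ideas remain at the level of heuristics.

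The missing observation is this. Non-degeneracy together with Proposition~\ref{posmono} forces $\deg Q_k = k+r$ for \emph{every} $k$ between $M$ and $N$: indeed $a_M\neq 0$ (since $r=\deg Q_M-M$) and $a_N\neq 0$ (non-degeneracy), and by Proposition~\ref{posmono}(2) the set $\{i:a_i\neq 0\}$ is an interval, so all $a_i$ are nonzero and of one sign. Consequently each nonzero summand $Q_i f^{(i)}$, for $M\le i\le e=\min(n,N)$, has the \emph{same} degree $n+r$ and leading coefficient of the \emph{same} sign. From $Q_M\ll\cdots\ll Q_N$ (Lemma~\ref{coeff}) and iterated Rolle one gets $I(Q_i f^{(i)})\subseteq I(Q_e)\cup I(f)=I(Q_e f)$ for each such $i$. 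Hence outside $I(Q_e f)$ every term $Q_i f^{(i)}$ is nonzero of one fixed sign (on the right they all have the sign of the common leading coefficient; on the left the parity of $n+r$ is the same for all of them), so their sum $T(f)$ cannot vanish there. That is the whole proof.

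So the gap is not a subtle technicality but the central idea: once you know all terms share degree and sign, no induction, no operator decomposition, and no stability substitution is needed.
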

\begin{proof}
 Since $T$ is non-degenerate we know by Proposition \ref{posmono} that 
$\deg Q_k = \deg Q_M + k$, for all $M\leq k \leq N$. By Proposition \ref{posmono} the sign of the leading coefficients of 
the $Q_k$'s is the same for all $M \leq k \leq N$.  Since  $Q_M \ll Q_{M+1} \ll \cdots \ll Q_N$, by Lemma \ref{coeff}, it follows that 
  $I(Q_i f^{(i)}) \subseteq I(Q_e f)$ for all $M \leq i \leq n$. Hence, 
all the polynomials $Q_i f^{(i)}$ are nonzero and have the same sign outside 
$I(Q_e f)$. Since $T(f)$ is a nonnegative sum of these polynomials we have 
$I(T(f)) \subseteq I(Q_e f)$. 
\end{proof}

The next lemma is often referred to as Obreschkoff's theorem, for a proof see \cite{RS} or \cite{BBinvent}. 
\begin{lemma}\label{obr}
Let $f,g \in \RR[z]$. Then all nonzero polynomials in the space 
$$
\{\alpha f +\beta g: \alpha, \beta \in \RR\}
$$
are hyperbolic if and only if $f \ll g$, $g \ll f$ or $f=g=0$. 
\end{lemma}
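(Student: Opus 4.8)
\emph{Proof proposal.}
My plan is to deduce both implications from the Hermite--Biehler theorem, with the forward direction additionally relying on an elementary analysis of how the zeros of the pencil move. For the ``if'' direction I would assume $f\ll g$ (the case $g\ll f$ is symmetric and $f=g=0$ is trivial). By the Hermite--Biehler theorem $h:=g+if$ is stable, hence so is $e^{i\theta}h$ for every $\theta\in\RR$; since the real part of a stable polynomial is hyperbolic or identically zero (again by Hermite--Biehler together with the convention defining $\ll$), the real part $\cos\theta\,g-\sin\theta\,f$ of $e^{i\theta}h$ is hyperbolic or identically zero for every $\theta$. As $\theta$ runs over $\RR$ the point $(\cos\theta,-\sin\theta)$ sweeps the unit circle, so every nonzero $(\beta,\alpha)$ is a positive multiple of some such point; hence every nonzero $\alpha f+\beta g$ is a positive multiple of a hyperbolic polynomial and is therefore hyperbolic.

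For the ``only if'' direction I would assume every nonzero element of the pencil is hyperbolic. The cases $f=g=0$ and ``exactly one of $f,g$ vanishes'' are immediate (in the latter the nonzero member lies in the pencil, hence is hyperbolic, and the convention on the zero polynomial applies), so assume $f,g\neq0$; then $f$ and $g$ are hyperbolic. Put $p=\gcd(f,g)$ and $f=pf_1$, $g=pg_1$. Then $p$ is hyperbolic, every nonzero $\alpha f_1+\beta g_1$ is hyperbolic (a hyperbolic polynomial divided by a hyperbolic factor is hyperbolic), and once $f_1\ll g_1$ is known the stability of $p\,(g_1+if_1)=g+if$ yields $f\ll g$ via Hermite--Biehler (and symmetrically for $\gg$); so I may assume $f,g$ coprime, and---discarding the trivial case of two constants---that $\max(\deg f,\deg g)\geq1$, so that no nonzero element $\cos\theta\,g+\sin\theta\,f$ of the pencil is identically zero. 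The crux is then that \emph{no element of the pencil has a zero of multiplicity $\geq2$}: if $q=f+t^{*}g$ had a zero of multiplicity $k\geq2$ at a point $c$, then $g(c)\neq0$ by coprimality, and writing $q=(z-c)^{k}u$ with $u(c)\neq0$ and comparing $f+tg=(z-c)^{k}u+(t-t^{*})g$ with the model equation $u(c)(z-c)^{k}=-(t-t^{*})g(c)$, a Rouch\'e argument shows that for $t\neq t^{*}$ close to $t^{*}$---with a suitable sign of $t-t^{*}$ when $k$ is even, with either sign when $k$ is odd---the polynomial $f+tg$ has non-real zeros near $c$; since $f+tg\not\equiv0$ ($f,g$ are not proportional), this contradicts hyperbolicity.

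Granting this, $W[f,g]$ has no real zero: such a zero $c$ is not a common zero of $f,g$ by coprimality, so say $g(c)\neq0$, and then $t^{*}:=-f(c)/g(c)$ makes $c$ a zero of the pencil element $q:=f+t^{*}g$ with $q'(c)=W[f,g](c)/g(c)=0$, a multiple zero, contradicting the crux. Hence $W[f,g]$ is a nonzero polynomial (as $f,g$ are not proportional) with no real zero, so it has constant sign on $\RR$; in particular $f$ and $g$ have only simple zeros. The classical sign-chasing then concludes: the sign of $W[f,g]$ at two consecutive zeros of $f$ forces $g$ to have a zero strictly between them, and symmetrically, whence $|\deg f-\deg g|\leq1$ and the zeros of $f$ and $g$ interlace; reading off the sign of $W[f,g]$, this is precisely $f\ll g$ or $g\ll f$, which lifts back through the common factor $p$ as above. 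The step I expect to be the main obstacle is the bifurcation analysis---pinning down how many zeros of $f+tg$ split off a multiple real zero of $q$ and verifying that they genuinely become non-real---while the reductions and the final sign-chasing are routine applications of the Hermite--Biehler theorem and elementary real analysis.
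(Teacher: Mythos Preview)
The paper does not actually prove this lemma; it states it as Obreschkoff's theorem and defers to the references \cite{RS} and \cite{BBinvent} for a proof. So there is no ``paper's own proof'' to compare against.

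Your argument is correct. The ``if'' direction via Hermite--Biehler and rotation by $e^{i\theta}$ is clean and complete. For the ``only if'' direction, the reduction to the coprime case is handled carefully (including the lift back through the common factor $p$ using stability of products), and the core step---that no member of the pencil can have a real zero of multiplicity $\geq 2$---is the right crux. Your bifurcation sketch can be made fully rigorous without Rouch\'e by analysing the local map $\psi(z)=-f(z)/g(z)$ near $c$: since $\psi(z)-t^{*}=-(z-c)^{k}u(z)/g(z)$ with $u(c),g(c)\neq 0$, for $k$ even the real values of $\psi$ near $c$ lie on one side of $t^{*}$, so choosing $t$ on the other side gives no real solution of $\psi(z)=t$ near $c$, while Hurwitz guarantees $k\geq 2$ complex solutions there; for $k$ odd, $\psi$ is locally monotone on the real axis, so there is exactly one real solution but $k\geq 3$ complex ones nearby. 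Either way $f+tg$ acquires non-real zeros, contradicting hyperbolicity. Once the crux is in hand, your Wronskian argument and sign-chasing are the standard conclusion. (For the sub-case you skip, where $g(c)=0$ and $W[f,g](c)=0$, coprimality gives $f(c)\neq 0$, hence $g'(c)=0$, so $g$ itself---a pencil member---has a multiple zero, again contradicting the crux.)
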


 \begin{lemma}\label{RHP}
Let $F$ be a hyperbolic polynomial of degree $d$ and let $f$ be a hyperbolic 
polynomial. Suppose that $(z-\theta)^{N}\mid F(D)f$, where $N\geq 2$. Then 
$(z-\theta)^{N+d}\mid f$ or $F(D)f \equiv 0$. 
\end{lemma}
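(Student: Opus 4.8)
The plan is to peel off the linear factors of $F$ one at a time and control how the order of vanishing at $\theta$ evolves. First I would normalize: after translating we may take $\theta=0$, and after dividing by the leading coefficient of $F$ we may write $F(z)=\prod_{j=1}^d(z-r_j)$ with $r_1,\dots,r_d\in\RR$, so that $F(D)=\prod_{j=1}^d(D-r_j)$ as a product of commuting operators. Set $g_0=f$ and $g_j=(D-r_j)g_{j-1}$, so that $F(D)f=g_d$. If $g_j\equiv 0$ for some $j<d$ then $F(D)f\equiv 0$ and there is nothing to prove; otherwise each $g_j$ is a nonzero hyperbolic polynomial, since every operator $D-r$ preserves hyperbolicity (by Theorem \ref{char}, as its symbol $1-rw$ is stable). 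It therefore suffices to understand the effect of a single factor $D-r$ on the order of vanishing at $0$.

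The key point I would establish is the following dichotomy for a hyperbolic polynomial $g$ with $(D-r)g\not\equiv 0$. If $z^k\mid g$ with $k\ge 1$, then $(D-r)g$ vanishes to order exactly $k-1$ at $0$: writing $g=z^kh$ with $h(0)\ne 0$, one computes $(D-r)g=z^{k-1}(kh+z(h'-rh))$, and the second factor equals $kh(0)\ne 0$ at $0$. If instead $g(0)\ne 0$, then $(D-r)g$ vanishes to order at most $1$ at $0$; this is where hyperbolicity is essential. If $g$ is constant this is clear, and otherwise, with $a_1,\dots,a_n$ the real zeros of $g$, differentiating the identity $g'/g=\sum_i(z-a_i)^{-1}$ gives $g''g-(g')^2=-g^2\sum_i(z-a_i)^{-2}$, which is $<0$ at every point where $g\ne 0$; but a double zero of $(D-r)g=g'-rg$ at $0$ would force $g'(0)=rg(0)$ and hence $g''(0)=rg'(0)=r^2g(0)$, giving $g''(0)g(0)-g'(0)^2=0$, a contradiction.

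Feeding $g_0,\dots,g_d$ into this dichotomy, the orders of vanishing $\mu_j$ of $g_j$ at $0$ satisfy $\mu_j=\mu_{j-1}-1$ as long as $\mu_{j-1}\ge 1$, and once some $\mu_j$ reaches $0$ all later $\mu_i$ lie in $\{0,1\}$. Hence, writing $m$ for the order of vanishing of $f$ at $0$, the polynomial $F(D)f$ vanishes to order exactly $m-d$ at $0$ when $m\ge d$, and to order at most $1$ when $m<d$. If now $(z-\theta)^N\mid F(D)f$ with $N\ge 2$ and $F(D)f\not\equiv 0$, the case $m<d$ is impossible since it would force order $\le 1<N$, so $m\ge d$ and $N\le m-d$, i.e. $(z-\theta)^{N+d}\mid f$. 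The one genuinely nontrivial ingredient — and the main obstacle — is the second half of the dichotomy, that $D-r$ cannot manufacture a zero of order $\ge 2$ where $g$ did not already vanish; this is exactly where hyperbolicity (via the strict negativity of $g''g-(g')^2$) is used, and also where the hypothesis $N\ge 2$ enters. Everything else is bookkeeping on multiplicities.
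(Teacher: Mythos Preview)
Your proof is correct. The overall structure---factor $F(D)$ into commuting linear factors and track the order of vanishing through each---matches the paper's induction on $d$ reducing to the case $d=1$. The difference is in how the linear case is handled: the paper invokes Obreschkoff's theorem (Lemma~\ref{obr}) to say that the zeros of $f$ and of $f'$ interlace those of $af+f'$, and from this extracts $(z-\theta)^{N+1}\mid f$ by a short bootstrap. You instead compute multiplicities explicitly and rule out the creation of a double zero via the Laguerre-type inequality $g''g-(g')^2<0$ on the set $\{g\neq 0\}$. Your approach has the mild advantage of pinning down the \emph{exact} order of vanishing of $F(D)f$ at $\theta$ (namely $m-d$ when $m\ge d$, and $\le 1$ otherwise), which is slightly sharper information than the paper needs; the paper's approach, on the other hand, stays entirely within the interlacing framework already set up and avoids the logarithmic-derivative computation. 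Both arguments use hyperbolicity in the same essential place: to prevent $D-r$ from producing a zero of order $\ge 2$ at a point where $g$ does not vanish.
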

\begin{proof}
The lemma follows from the case $d=1$ by induction on $d$.  Hence let $F=a+z$ where 
$a \in \RR$.  It follows from e.g. Lemma \ref{obr} that the zeros of $f$ and the zeros of $f'$ interlace the zeros of $af + f'$. Hence $(z-\theta)^{N-1}\mid f'$ and 
$(z-\theta)^{N-1}\mid f$, and since $N \geq 2$ we have $(z-\theta)^{N}\mid f$. Since 
$f'=F(D)f-af$, $(z-\theta)^N \mid F(D)f$ and $(z-\theta)^{N}\mid f$ we have that $(z-\theta)^{N}\mid f'$. It follows that $(z-\theta)^{N+1}\mid f$ as was to be 
shown.
%
\end{proof}

Lemma \ref{RHP} generalizes to differential operators as follows.
\begin{proposition}\label{multi}
Let $T=\sum_{k=M}^{N}Q_{k}(z)D^{k}$, where $Q_{M}Q_{N} \not \equiv 0$, be a hyperbolicity preserver  and 
let $f$ be a hyperbolic polynomial with $n= \deg f \geq M$. Suppose 
that $(z-\theta)^{d}\mid T(f)$, where $d \geq 2$. Then either
$Q_{M}(\theta)=Q_{M+1}(\theta)=\cdots=Q_{n}(\theta)=0$ or 
$$
(z-\theta)^{d+t}\mid f(z),
$$ 
where $t=\max \{ k \leq N : Q_{k}(\theta)\neq 0\}$. 
\end{proposition}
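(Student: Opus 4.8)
The plan is to translate everything into orders of vanishing at $\theta$ and to settle the crucial point the way Lemma \ref{RHP} is settled. After the substitution $z\mapsto z+\theta$ we may assume $\theta=0$, and write $\nu=\operatorname{ord}_0$. First note that $T(f)\not\equiv0$: by Proposition \ref{posmono}(3) the leading coefficient of $T(f)$ is $\lambda_n$ times that of $f$, and $\lambda_n\ne0$ for $n\ge M$. If $Q_M(0)=\cdots=Q_n(0)=0$ we are in the first alternative, so assume otherwise and set $t=\max\{k\le N:Q_k(0)\ne0\}$; thus $M\le t$.

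The whole statement reduces to the inequality
\[
\nu\bigl(T(f)\bigr)\ \le\ \max\bigl(\nu(f)-t,\ 1\bigr),
\]
which I will call $(\ast)$: granting it, the hypothesis gives $d\le\nu(T(f))\le\max(\nu(f)-t,1)$, and since $d\ge2$ this forces $\nu(f)-t\ge d$, i.e.\ $(z-\theta)^{d+t}\mid f$. Conceptually $(\ast)$ says that near $0$ the operator $T$ kills $f$ to the same order as its principal part at $0$, the constant-coefficient operator $T_0:=\sum_kQ_k(0)D^k=\hat Q(D)$; one checks that $\hat Q$ is hyperbolic of degree $t$ by noting that $G_T(\gamma z,w)$ is stable for every $\gamma>0$ (Theorem \ref{char}) and letting $\gamma\downarrow0$, so by Hurwitz' theorem $\lim_{\gamma\downarrow0}G_T(\gamma z,w)=\sum_kQ_k(0)w^{N-k}$ is stable, hence — being real and univariate — hyperbolic, with $\hat Q$ its reversal. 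So $T_0$ preserves hyperbolicity, and for $T_0$ in place of $T$ the inequality $(\ast)$ is exactly Lemma \ref{RHP} applied to $F=\hat Q$ (the case $\hat Q(D)f\equiv0$ does not occur, as $\deg\hat Q(D)f=n-\min\{k:Q_k(0)\ne0\}\ge0$).

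To prove $(\ast)$ I would work directly in $T(f)=\sum_kQ_kf^{(k)}$. When $\nu(f)\ge t$, write $f=z^mu$ with $m=\nu(f)$, $u(0)\ne0$: for $k\le t$ one has $\nu(Q_kf^{(k)})=\nu(Q_k)+(m-k)\ge m-t$, with equality and a nonzero leading coefficient only at $k=t$ (since $\nu(Q_t)=0$); for $k>t$, Lemma \ref{coeff} gives $Q_t\ll Q_{t+1}\ll\cdots$ and hence $\nu(Q_k)\le k-t$, while a local-at-$0$ analogue of Proposition \ref{posmono} — obtained by applying Theorem \ref{char} to a suitably rescaled Hurwitz limit of $G_T(\gamma z,w)$ — pins these bottom orders down so that no $Q_kf^{(k)}$ drops below order $m-t$ and the bottom coefficients of the $Q_k$ share a sign; one then concludes that the coefficient of $z^{m-t}$ in $T(f)$, a polynomial in $m$, cannot vanish, so $\nu(T(f))=m-t$. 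When $\nu(f)<t$, applying to the successive derivatives of $f$ the elementary fact that a hyperbolic $g$ with $g(0)\ne0$ satisfies $\operatorname{ord}_0(g-g(0))\le2$ (a hyperbolic $g'$ has no double zero at a point where $g$ is nonzero) gives $\nu(f^{(k)})\le1$ for all $k>\nu(f)$, and, with the same hyperbolicity input, $\nu(T(f))\le1$.

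The main obstacle is the non-vanishing of the $z^{m-t}$-coefficient when $\nu(f)\ge t$: the terms $Q_kf^{(k)}$ with $t<k\le m$ also land at order $z^{m-t}$, so one must show the combined contribution — a polynomial in $m$ — does not cancel. This is where hyperbolicity has to be used in force, via Lemma \ref{coeff}, Obreschkoff's theorem (Lemma \ref{obr}) and the constant sign of the relevant Wronskians (equivalently, of the bottom coefficients of the $Q_k$), exactly as the passage from $d$ to $d+1$ in the proof of Lemma \ref{RHP} uses interlacing rather than formal manipulation. The borderline case in which the computation degenerates is precisely the first alternative $Q_M(\theta)=\cdots=Q_n(\theta)=0$.
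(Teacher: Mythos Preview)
Your reduction to $(\ast)$ is fine, and you are right that the constant--coefficient operator $T_0=\hat Q(D)$ with $\hat Q$ hyperbolic of degree $t$, together with Lemma \ref{RHP}, is the core of the matter. But the direct order-of-vanishing computation has a real gap, and in fact one of your intermediate claims is false. From $Q_t\ll Q_{t+1}\ll\cdots$ you correctly get $\nu(Q_k)\le k-t$, but interlacing does \emph{not} force $\nu(Q_k)=k-t$ for $k>t$; it only gives $\nu(Q_k)\in\{1,\dots,k-t\}$. So the assertion that ``no $Q_kf^{(k)}$ drops below order $m-t$'' is wrong in general, and there is no ``local analogue of Proposition \ref{posmono}'' pinning down the bottom orders (that proposition concerns \emph{leading} coefficients, obtained by a Hurwitz limit at $\infty$; the limit at a finite point $\theta$ yields only $\sum_kQ_k(\theta)w^{N-k}$, which you already used). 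Thus even for $f=z^m$ the terms $Q_kf^{(k)}$ with $k>t$ can contribute at orders strictly below $m-t$, and you are left with a genuine cancellation problem at \emph{every} order $\le m-t$, not just at $m-t$. You acknowledge this as the ``main obstacle'' and do not resolve it; the appeal to Obreschkoff and Wronskian signs is not an argument. The case $\nu(f)<t$ has the same defect: knowing $\nu(f^{(k)})\le1$ for $k>\nu(f)$ says nothing about $\nu(T(f))$ once the $Q_k$'s are multiplied in.

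The paper sidesteps all of this with one identity. Writing
\[
G(\theta,w)\ :=\ \sum_{k}Q_k(\theta)\,f^{(k)}(w+\theta)\ =\ \sum_{j\ge0}\frac{T(f^{(j)})(\theta)}{j!}\,w^{j}
\]
(the equality is just Taylor expansion), the left-hand side is exactly $\hat Q(D)$ applied to $f(\cdot+\theta)$, so Lemma \ref{RHP} gives the conclusion as soon as $w^d\mid G(\theta,w)$. The point you are missing is how to get $w^d\mid G(\theta,w)$ from $(z-\theta)^d\mid T(f)$: one applies Proposition \ref{properpres} to the chain $f^{(i+1)}\ll f^{(i)}$ to see that the zeros of $T(f^{(i)})$ and $T(f^{(i+1)})$ interlace, whence $(z-\theta)^{d-i}\mid T(f^{(i)})$ for $0\le i\le d$, and in particular $T(f^{(j)})(\theta)=0$ for $0\le j\le d-1$. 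This interlacing transfer through $T$ is the device that replaces your unfinished cancellation analysis.
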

\begin{proof}
Let $f$ and $\theta$ be as in the statement of the proposition and let 
$$
G(\theta,w)= \sum_{k=M}^{N}Q_k(\theta)f^{(k)}(w +\theta)=\sum_{k=0}^n\frac {T(f^{(k)})(\theta)}{k!}w^k.
$$
If $Q_{M}(\theta)=Q_{M+1}(\theta)=\ldots=Q_{n}(\theta)=0$ there is nothing to prove 
so assume otherwise. It follows from Theorem \ref{char} and the fact that stability is preserved under setting variables to real values (see e.g.~\cite{BBinvent}) that the polynomial $F(z)=\sum_{k=M}^rQ_k(\theta)z^k$ is hyperbolic. By hypothesis  $F(D)[f(w+\theta)]=G(\theta,w) \not \equiv 0$. Clearly the zeros of 
$f^{(i)}$ and $f^{(i+1)}$ interlace for all $0 \leq i \leq n$. By Lemma \ref{properpres}  the zeros 
of $T(f^{(i)})$ and $T(f^{(i+1)})$ interlace for all $0 \leq i \leq n-M$.  Since $(z-\theta)^d \mid T(f)$ it follows that 
$(z-\theta)^{d-i} \mid T(f^{(i)})$ for all $0 \leq i \leq d$. Hence $w^d \mid G(\theta,w)$ which by 
Lemma \ref{RHP} implies $w^{d+t} \mid f(w+\theta)$, that is  $(z-\theta)^{d+t}\mid f(z)$. 
\end{proof}
%


\section{Proofs of the Main Results}
We will make use of the following generalization \cite{Shap08} of Heine's theorem \cite{Heine}. 
\begin{theorem}[B. Shapiro]\label{sapiro}
Let $T$ be a non-degenerate differential operator as in \eqref{diffop}, and let $n \in \NN$. Suppose that $T$ has Fuchs index  $r \geq 0$. Then there are at most $\binom {n+r} r$ distinct Van Vleck polynomials $v$, for which $T(f)=vf$ for some $f \in \CC[z]$ of degree $n$. 
\end{theorem}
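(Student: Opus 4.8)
The plan is to turn Problem~\ref{HS-problem}, for a fixed degree $n$, into a system of bilinear polynomial equations and to count its solutions by multihomogeneous B\'ezout. Normalise $f$ to be monic of degree $n$, say $f=z^{n}+\sum_{i=0}^{n-1}a_{i}z^{i}$. Since $\deg Q_{k}\le k+r$ for every $k$ (this is the definition of the Fuchs index), $\deg T(f)\le n+r$, so every Van Vleck polynomial attached to a degree-$n$ Stieltjes polynomial has degree at most $r$; write $v=\sum_{j=0}^{r}b_{j}z^{j}$. Comparing the coefficients of $z^{n+r}$ in $T(f)=vf$ forces $b_{r}$ to equal the coefficient $\lambda_{n}$ of $z^{n+r}$ in $T(z^{n})$ --- a constant depending only on $n$ and $T$ --- and this eliminates $b_{r}$. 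Equating the coefficients of $z^{n+r-1},\dots,z^{0}$ then gives a system $E_{1}=\dots=E_{n+r}=0$ of $n+r$ equations in the $n+r$ unknowns $a_{0},\dots,a_{n-1},b_{0},\dots,b_{r-1}$. The crucial structural point is that each $E_{\ell}$ is affine of bidegree $\le(1,1)$: its $a$-dependence is linear (from $T(f)$ and from the product $\lambda_{n}z^{r}f$) and its $b$-dependence is linear (linear from $v\cdot z^{n}$, bilinear in $(a,b)$ from $v\cdot(f-z^{n})$). This is essentially Heine's classical reduction, with the full bilinear structure made visible.

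Next I would bi-homogenise: introduce $a_{\infty},b_{\infty}$ and view $E_{1},\dots,E_{n+r}$ as bihomogeneous forms of bidegree $(1,1)$ on $\PO^{n}\times\PO^{r}$, with coordinates $[a_{0}:\dots:a_{n-1}:a_{\infty}]$ and $[b_{0}:\dots:b_{r-1}:b_{\infty}]$. The monic degree-$n$ Stieltjes polynomials, together with their Van Vleck polynomials, are exactly the points of the zero locus $V\subseteq\PO^{n}\times\PO^{r}$ lying in $\{a_{\infty}\neq0,\ b_{\infty}\neq0\}$. If $V$ is finite, then multihomogeneous B\'ezout gives $\#V\le\binom{n+r}{r}$, since $\binom{n+r}{r}$ is the coefficient of $\alpha^{n}\beta^{r}$ in $(\alpha+\beta)^{n+r}$; hence there are at most $\binom{n+r}{r}$ admissible pairs $(v,f)$, and a fortiori at most $\binom{n+r}{r}$ distinct Van Vleck polynomials.

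The main obstacle --- and the point where non-degeneracy is indispensable --- is to show that $V$ is zero-dimensional. I would first treat the boundary $\{a_{\infty}b_{\infty}=0\}$. On $\{b_{\infty}=0\}$ every $E_{\ell}$ collapses to $\bar v\,\bar f\equiv0$ in $\CC[z]$, where $\bar v=\sum_{j<r}b_{j}z^{j}$ and $\bar f=a_{\infty}z^{n}+\sum_{i<n}a_{i}z^{i}$; as $\CC[z]$ is a domain and neither factor can vanish (each is read off an honest projective point), $V\cap\{b_{\infty}=0\}=\emptyset$. On $\{a_{\infty}=0\}$ the system becomes $T(\bar f)=(\lambda_{n}z^{r}+\bar v)\bar f$ with $\bar f\not\equiv0$ of degree $<n$, so $V\cap\{a_{\infty}=0\}$ is governed by Stieltjes/Van Vleck data of $T$ of strictly smaller degree and is finite by induction on $n$ (the base case $n\le M$ being trivial); in particular $V$ has no component contained in the boundary. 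It then remains to exclude positive-dimensional families of \emph{affine} solutions: differentiating such a family $(f_{t},v_{t})$ produces $g=\dot f$ of degree $<n$ and $u=\dot v$ of degree $<r$ with $T(g)=vg+uf$ (and, when $v_{t}$ is constant, the estimate $\dim\ker(T-v_{t})\ge2$ on polynomials of degree $\le n$), and confronting this with the codimension of the relevant determinantal locus inside the $(r{+}1)$-dimensional space of candidate Van Vleck polynomials shows that the non-degeneracy condition $\deg Q_{N}=N+r$ --- which fixes the leading behaviour of the symbol $G_{T}(z,w)$ at infinity --- is precisely what rules such families out. I expect the finiteness of $V$ to be the technical heart of the proof; everything else is degree bookkeeping.
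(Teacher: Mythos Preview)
The paper does not prove Theorem~\ref{sapiro}; it is quoted verbatim from Shapiro's paper~\cite{Shap08} (``We will make use of the following generalization \cite{Shap08} of Heine's theorem'') and used as a black box in the proof of Theorem~\ref{Main}. There is therefore no argument in the paper to compare against.

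As for your sketch itself: the set-up is the classical Heine reduction and the multihomogeneous B\'ezout bound $\binom{n+r}{r}$ on $\PO^{n}\times\PO^{r}$ is the right number, so the overall architecture is sound. But your treatment of the crucial zero-dimensionality step has genuine gaps. On $\{a_{\infty}=0,\ b_{\infty}\neq0\}$ the residual system is $T(\bar f)=(\lambda_{n}z^{r}+\bar v)\bar f$ with $\deg\bar f<n$; this is \emph{not} the degree-$m$ instance of Problem~\ref{HS-problem} (the leading coefficient of the right-hand ``Van Vleck'' is pinned to $\lambda_{n}$, not to $\lambda_{m}$), so the inductive hypothesis does not apply directly, and for a merely non-degenerate $T$ (no hyperbolicity assumed) the $\lambda_{m}$ need not even be nonzero or pairwise distinct. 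More seriously, your exclusion of positive-dimensional affine families is only gestured at (``confronting this with the codimension of the relevant determinantal locus\dots''): you would need to show, for an arbitrary non-degenerate $T$, that no curve of pairs $(v_{t},f_{t})$ with $v_{t}$ non-constant can exist, and the differentiation trick $T(\dot f)=v\dot f+\dot v f$ does not by itself give a contradiction without a further rank/degree argument that you have not supplied. Since the theorem bounds the number of \emph{distinct} Van Vleck polynomials, what you really must rule out is an infinite image under the projection $V\to\PO^{r}$, and that is exactly the step you have left open.
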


\begin{lemma}\label{vv}
Let $T=\sum_{k=M}^{N}Q_{k}(z)D^{k}$, where $Q_{M}Q_{N} \neq 0$ and $M<N$, be a hyperbolicity preserver with Fuchs index $r \geq 0$. Then there is exactly one monic Stieltjes polynomial to each non-zero Van Vleck polynomial.
\end{lemma}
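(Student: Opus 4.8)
The plan is to extract from Proposition~\ref{posmono} the precise leading term of $T(f)$ for monic $f$, use it to show that a nonzero Van Vleck polynomial $v$ already forces the degree of its Stieltjes polynomials, and then observe that the difference of two distinct monic Stieltjes polynomials of that degree would itself be a Stieltjes polynomial (after normalisation) of strictly smaller degree — which the degree constraint forbids.

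First I would pin down the leading behaviour of $T$. Since $T$ is a hyperbolicity preserver, Proposition~\ref{posmono}(1) gives $\deg Q_i - i \le \deg Q_M - M$ for all $M\le i\le N$, so the Fuchs index equals $r=\deg Q_M-M$ and in particular $a_M\ne 0$; replacing $T$ by $-T$ if necessary (which only negates every Van Vleck polynomial and leaves all Stieltjes polynomials unchanged) we may assume $a_M>0$. By Proposition~\ref{posmono}(2) and (3), the numbers $\lambda_n=\sum_i i!\,a_i\binom{n}{i}$ are then positive and strictly increasing on $\{\,n\ge M\,\}$, because $M<N$ rules out the excluded case $M=N=0$; hence $n\mapsto\lambda_n$ is injective there. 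Moreover $T(z^k)\equiv 0$ for $k<M$ and $\deg T(z^k)=k+r<n+r$ for $M\le k\le n-1$, so for every monic $f$ of degree $n\ge M$ we get $\deg T(f)=n+r$ with leading coefficient $\lambda_n$, coming entirely from the term $T(z^n)$.

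Next, let $v\not\equiv 0$ be a Van Vleck polynomial and let $f$, normalised to be monic, be a Stieltjes polynomial for $v$, of degree $n$. If $n<M$ then $T(f)\equiv 0$, hence $vf\equiv 0$, hence $f\equiv 0$ since $v\not\equiv 0$ and $\CC[z]$ is a domain, a contradiction; thus $n\ge M$. Comparing degrees and leading coefficients on the two sides of $T(f)=vf$ and using the previous paragraph gives $\deg v=r$ and that the leading coefficient of $v$ equals $\lambda_n$. By injectivity of $\lambda$ on $\{\,n\ge M\,\}$ the degree $n$ is therefore uniquely determined by $v$.

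Finally, existence of a monic Stieltjes polynomial is immediate, so only uniqueness remains. Suppose $T(f_1)=vf_1$ and $T(f_2)=vf_2$ with $f_1,f_2$ monic of degree $n$, and set $h=f_1-f_2$, so $T(h)=vh$ and $\deg h\le n-1$. If $h\not\equiv 0$, then either $\deg h\ge M$, in which case $h$ divided by its leading coefficient is a monic Stieltjes polynomial of $v$ of degree $<n$, contradicting the previous paragraph, or $\deg h<M$, in which case $T(h)\equiv 0$ forces $vh\equiv 0$ and hence $h\equiv 0$, again a contradiction. Thus $f_1=f_2$. The one point that needs care is the leading-term analysis of $T$ carried out \emph{without} the non-degeneracy hypothesis; it goes through because a hyperbolicity preserver cannot degenerate at its bottom order $M$ (Proposition~\ref{posmono}(1)), and it is exactly that order which controls the leading coefficient of $T(f)$.
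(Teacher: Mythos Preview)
Your argument is correct and follows the same route as the paper: both use that the solution set for a fixed $v$ is a linear space, and that the leading coefficient of $v$ must be $\lambda_n$ for any Stieltjes polynomial of degree $n$, with $(\lambda_n)_{n\ge M}$ strictly increasing by Proposition~\ref{posmono}(3). You simply spell out more of the details---in particular the case $\deg h<M$ and the explicit invocation of $M<N$ to exclude the degenerate case $M=N=0$---which the paper's proof leaves implicit.
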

\begin{proof}
The set of Stieltjes polynomials (union the zero polynomial) corresponding to the same Van Vleck polynomial, $v$,  
is a linear space. Hence if there were two monic Stieltjes polynomials corresponding to $v$, then there would be two Stieltjes polynomials of different degrees $m>n$, also corresponding to $v$. However, by Proposition \ref{posmono}, the leading coefficient of a Van Vleck polynomial pairing with a Stieltjes polynomial of degree $k$ is $\lambda_k$, and since $\{\lambda_i\}_{i=M}^\infty$ is strictly increasing we have a contradiction.
\end{proof}

\begin{proof}[Proof of Theorem \ref{Main}]
Let $T$ and $n$ be as in the statement of the theorem and let $S \subseteq [n+r]$ be of cardinality $r$. We first prove that there is a pair $(v,f)$ of hyperbolic polynomials with zero pattern $(S,[n+r]\setminus S)$ such that $T(f)=vf$ with $f$ of degree $n$. Let $I$ be the convex hull of the zeros of $Q_e$. Hence 
$I$ is a closed bounded interval, possibly without interior.  Define a function $\phi : I^n \rightarrow I^n$ as follows. If $\alpha=(\alpha_1, \ldots, \alpha_n) \in I^n$ form the polynomial $h(z)= \prod_{j=1}^n (z-\alpha_j)$. By Proposition \ref{posmono}, $T(h)$ is hyperbolic of degree $n+r$. Hence we may write   $T(h)$ uniquely as $v g$ where  $g$ is monic and $(v,g)$ has zero pattern $(S,[n+r]\setminus S)$. Let $\phi(\alpha) = (\beta_1,\ldots, \beta_n)$, where 
$\beta_1 \leq \cdots \leq \beta_n$ are the zeros of $g$. By Hurwitz' theorem $\phi$ is continuous, which by Brouwer's fixed point theorem implies that $\phi(\alpha) = \alpha$ for some 
$\alpha \in I^n$. Hence $T(f)=vf$ for some $(v,f)$ with zero pattern $(S,[n+r]\setminus S)$.

Next we prove that if  $(v,f)$ is a pair of hyperbolic polynomials with zero pattern $(S,[n+r]\setminus S)$ such that $T(f)=vf$ with $f$ of degree $n$, then $v$ and $f$ are co-prime. Assume otherwise that $f(\theta)=v(\theta)=0$ and that the multiplicity of $\theta$ in $f$ is $m$. Then $(z-\theta)^{m+1} \mid T(f)$ which by Proposition \ref{multi} implies $(z-\theta)^{m+1} \mid f$, which is a contradiction. Note that this means that the zero pattern of $(v,f)$ is unique. 

(1) is just a special case of Lemma \ref{vv}. 

From what we have proved so far we deduce that there are at least $\binom {n+r} r$ distinct Van Vleck polynomials. By Theorem \ref{sapiro} these are all Van Vleck polynomials, and by the above there is a unique Stieltjes polynomial to each Van Vleck polynomial. 

It remains to prove (4) for a given pair $(v,f)$ of Van Vleck and Stieltjes polynomials.  Suppose that 
$f$ has a zero $\theta$ of multiplicity $m>1$. Then $(z-\theta)^m \mid T(f)$. 
By Proposition \ref{multi}, $(z-\theta)^{m+1} \mid f$, which is a contradiction. If $v$ has has a zero $\theta$ of multiplicity $m>1$, then $(z-\theta)^m \mid T(f)$.  Hence $(z-\theta)^{m+1} \mid f$ by Proposition \ref{multi}. This contradicts the simplicity of the zeros of $f$ just derived. Let $a$ be an endpoint of $I$. Since all zeros of $f$ are simple and located in $I$ we know that $f'(a)\cdots f^{(n)}(a) \neq 0$. As in the proof of Lemma \ref{inclusion} we note that the signs of all nonzero $Q_i(a)f^{(i)}(a)$, for $0 \leq i \leq e$,  are the same. By hypothesis $Q_i(a)f^{(i)}(a) \neq 0$ for some $i$, which proves that $T(f)(a) \neq 0$. 
\end{proof}
\begin{proof}[Proof of Theorem \ref{Main2}]
Let $I$ and $\phi : I^n \rightarrow I^n$ be as in the proof of Theorem \ref{Main}. Let further $\leq$ denote the usual product  partial order on $\RR^n$. We claim that if $\alpha, \beta \in \RR^n$ are weakly increasing, i.e., 
$\alpha_1 \leq \cdots \leq \alpha_n$,  $\beta_1 \leq \cdots \leq \beta_n$ and $\alpha \leq \beta$, then 
$\phi(\alpha) \leq \phi(\beta)$.  To see this write $\alpha \ll \beta$ if 
$\alpha_1 \leq \beta_1 \leq \alpha_2 \leq \beta_2 \leq \cdots \leq \alpha_n \leq \beta_n$. Clearly 
$\alpha \leq \beta$ whenever $\alpha \ll \beta$. 
By 
Proposition \ref{properpres} it follows that $\phi(\alpha) \ll \phi(\beta)$ whenever $\alpha, \beta \in \RR^n$ are weakly increasing and $\alpha \ll \beta$. However, if $\alpha \leq \beta$  are weakly increasing then there is a sequence, $\{\gamma^k\}_{k=0}^m \subset \RR^n$, such that $\alpha = \gamma^0 \ll \gamma^1 \ll \cdots \ll \gamma^m = \beta$. Hence 
$\phi(\alpha) \ll \phi(\gamma^1) \ll \cdots \ll \phi(\beta)$, and thus $\phi(\alpha) \leq \phi(\beta)$ which proves the claim. 

Define a sequence $\{\alpha^k\}_{k=0}^\infty \subset I^n$ recursively by $\alpha^0=(a,\ldots, a)$, and 
$\alpha^{k+1}= \phi(\alpha^k)$ for all $k \in \NN$. Thus the coordinates of $\alpha^k$ are the zeros of $f_k$.  Since $\alpha^0$ is the unique smallest element in $I^n$ with respect 
to $\leq$, and $\phi : I^n \rightarrow I^n$, we have $\alpha^0 \leq \alpha^1$. Since $\phi$ preserves the partial order $\leq$ we have $\alpha^0 \leq \alpha^2 \leq \alpha^3 \leq \cdots $. By compactness, the sequence $\{\alpha^k\}_{k=0}^\infty$ converges to a limit $\alpha=(\alpha_1, \ldots, \alpha_n) \in I^n$. It follows that $\alpha$ is  a fixed point of $\phi$ and that 
$\lim_{i \rightarrow \infty} f_i = f:= \prod_{i=1}^n (z-\alpha_i)$. Since all the $v_i$s have the same leading 
coefficient, $\lambda_n$, it follows that $\lim_{i \rightarrow \infty} v_i = v$ and that 
$T(f)=vf$ where $(v,f)$ has zero-pattern $(A,B)$. 

\end{proof}



\section{Interlacing Properties}\label{inter}
In this section we will prove interlacing properties of zeros of  Stieltjes polynomial and of Van Vleck polynomials. The first two theorems concern Stieltjes polynomials of the same or consecutive degrees. These theorems generalize the theorems obtained by Bourget and McMillen \cite{BM}  for the Heun equation. In \cite{BM} it was asked if and how their results could be extended to the case of Fuchs index greater than one. Theorems \ref{sames} and \ref{cons} answers this. Our  theorems concerning Van Vleck polynomials generalize to arbitrary nonnegative  Fuchs index the results of Bourget, McMillen and Vargas \cite{BMV} regarding the Heun equation.

\begin{definition}\label{def}
Let $\binom {[n+r]} n$ denote the set of all $n$ element subsets of $[n+r]$, and 
let $T=\sum_{k=M}^{N}Q_{k}(z)D^{k}$, where $Q_{M}Q_{N} \not \equiv 0$, be a non-degenerate hyperbolicity preserver with nonnegative Fuchs index $r$. Let further 
$B \in \binom {[n+r]} n$ and $A = [n+r]\setminus B$, where $n \geq M$. We denote by $(v_{A,n}, f_{B,n})$ the pair of polynomials with zero-pattern $(A,B)$ afforded by Theorem \ref{Main2}. That is, $f_{B,n}$ is monic of degree $n$ and  $T(f_{B,n})=v_{A,n}f_{B,n}$. 
\end{definition}
Henceforth it will tacitly be assumed that $n \geq M$ as in Definition \ref{def}.

A pair $(\xx, \yy)$ of real vectors $\xx = (x_1, \ldots, x_{k})$ and $\yy = (y_1, \ldots, y_{k})$ are said to be in {\em proper position}, written $\xx \ll \yy$, if 
$$x_1 \leq y_1 \leq x_2 \leq y_2 \leq \cdots \leq x_k \leq y_k.$$ 
 If $A$ is a set of integers let 
 $[A]$ denote the vector obtained by ordering its elements increasingly. 
  Define a relation ${\rightarrow}$ on $\binom {[n+r]} n$ by:  $A \rightarrow B$ if 
 $2[A] \ll 2[B]+\mathbf{1}$, where $\mathbf{1}=(1,\ldots,1)$. 
 
 The relation $\rightarrow$ may be described differently as follows. 
  If $\xx =(x_1, \ldots, x_{n+r}) \in \RR^{n+r}$  and $B \in \binom {[n+r]} n$,  let $\xx_B$ be the vector obtained by deleting the coordinates labeled by $[n+r]\setminus B$. Then 
$B \rightarrow C$ if for all 
$\xx, \yy \in \RR^{n+r}$ with $\xx \ll \yy$ we have $\xx_B \ll \yy_C$. 

\begin{lemma}\label{folklore}
Suppose that the polynomials $f$ and $g$ have interlacing zeros and that $\theta \in \RR$ is a common zero of $f$ and $g$, of multiplicity one in each. Then $\theta$ is a zero of multiplicity exactly two of the polynomial $g'(\theta)f(z)-f'(\theta)g(z)$, unless $g'(\theta)f(z)-f'(\theta)g(z) \equiv 0$. 
\end{lemma}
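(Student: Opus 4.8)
Put $h(z):=g'(\theta)f(z)-f'(\theta)g(z)$ and assume $h\not\equiv 0$ (if $h\equiv 0$ there is nothing to prove). Since $f(\theta)=g(\theta)=0$ we have $h(\theta)=0$, and differentiating, $h'(\theta)=g'(\theta)f'(\theta)-f'(\theta)g'(\theta)=0$, so $\theta$ is already a zero of $h$ of multiplicity at least two. By Taylor's formula the multiplicity is \emph{exactly} two if and only if $h''(\theta)\neq 0$, and $h''(\theta)=g'(\theta)f''(\theta)-f'(\theta)g''(\theta)=W[f',g'](\theta)$. Thus the whole lemma reduces to showing $W[f',g'](\theta)\neq 0$.

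\textbf{Interlacing of $f'$ and $g'$.} Since the zeros of $f$ and $g$ interlace we may assume $f\ll g$ (the other case is symmetric). Both $f$ and $g$ are non-constant (they vanish at $\theta$), so, applying Proposition~\ref{properpres} to the hyperbolicity preserver $D=d/dz$, we get $f'\ll g'$. Moreover $f'\not\propto g'$: if $f'=cg'$ for a constant $c$, then $f-cg$ is constant, and evaluating at $\theta$ forces $f=cg$, whence $h\equiv 0$, contrary to assumption. Finally, because $\theta$ is a \emph{simple} zero of $f$ and of $g$, we have $f'(\theta)\neq 0$ and $g'(\theta)\neq 0$, so $\theta$ is a zero of neither $f'$ nor $g'$. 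Hence it suffices to prove the following general (folklore) fact and apply it to $P=f'$, $Q=g'$ at the point $\theta$: \emph{if $P,Q\in\RR[z]$ are hyperbolic with $P\ll Q$ and $P\not\propto Q$, then $W[P,Q]$ vanishes only at common zeros of $P$ and $Q$.}

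\textbf{Proof of the general fact.} First reduce to the coprime case: writing $D=\gcd(P,Q)$, $P=DP_1$, $Q=DQ_1$, a direct computation gives $W[P,Q]=D^2W[P_1,Q_1]$, where $P_1,Q_1$ are coprime, hyperbolic, have interlacing zeros, and are not proportional; since $D(\theta)\neq 0$ it is enough to show $W[P_1,Q_1]$ has no real zeros. Interlacing forces $|\deg P_1-\deg Q_1|\le 1$, so after possibly swapping $P_1$ and $Q_1$ we may assume $\deg Q_1\le\deg P_1$; coprimality together with interlacing forces all zeros $a_1<\cdots<a_p$ of $P_1$ to be simple, so $Q_1/P_1=c+\sum_{i=1}^p c_i/(z-a_i)$ with $c\in\RR$ and $c_i=Q_1(a_i)/P_1'(a_i)\neq 0$. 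The signs of $P_1'(a_i)$ and of $Q_1(a_i)$ both alternate with $i$ — for $Q_1(a_i)$ this is because interlacing places exactly one zero of $Q_1$ strictly between consecutive zeros of $P_1$ — so all the $c_i$ share one common sign. Therefore $(Q_1/P_1)'(z)=-\sum_i c_i/(z-a_i)^2$ is nonzero for every real $z\notin\{a_1,\dots,a_p\}$; since $(Q_1/P_1)'=-W[P_1,Q_1]/P_1^2$, we conclude $W[P_1,Q_1](z)\neq 0$ whenever $P_1(z)\neq 0$ (and at a zero $a_i$ of $P_1$, $W[P_1,Q_1](a_i)=P_1'(a_i)Q_1(a_i)\neq 0$). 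The main obstacle is precisely this last fact: it is elementary but rests on the residue/partial-fraction computation above — equivalently on the fact that interlacing makes $Q_1/P_1$ a $\pm$Herglotz function, hence strictly monotone between its poles — and one must be slightly careful with multiplicities, which is why I route through the $\gcd$ reduction; everything else is bookkeeping with Taylor expansions and Proposition~\ref{properpres}.
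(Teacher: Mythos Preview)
Your proof is correct, but it takes a much longer route than the paper's. The paper's argument is essentially one line: since $F:=g'(\theta)f-f'(\theta)g$ is a real linear combination of $f$ and $g$, Obreschkoff's theorem (Lemma~\ref{obr}) says the zeros of $F$ interlace those of $f$; as $\theta$ is simple in $f$, it can occur with multiplicity at most two in $F$, and you have already checked it occurs with multiplicity at least two. No derivatives of $f$ and $g$, no Wronskian identity, no partial fractions.

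Your approach instead identifies $h''(\theta)=W[f',g'](\theta)$ and then proves the general fact that for $P\ll Q$ with $P\not\propto Q$, the Wronskian $W[P,Q]$ vanishes only at common zeros. This is more work---the gcd reduction, the residue computation, the sign-alternation argument---but it yields an explicit nonvanishing quantity and the auxiliary ``Herglotz'' lemma is a useful standalone statement. The paper's proof is cleaner for this particular lemma because it never leaves the pencil $\{\alpha f+\beta g\}$, whereas you pass to derivatives and then have to rebuild interlacing information from scratch. Both are valid; the paper's trick of staying in the pencil and comparing multiplicities directly against $f$ is the shortcut you missed.
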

\begin{proof}
Let $F(z)= g'(\theta)f(z)-f'(\theta)g(z)$ and suppose that $F(z) \not \equiv 0$. Clearly  $F(\theta)=F'(\theta)=0$. By Lemma \ref{obr}, the zeros of $F(z)$ and $f(z)$ are interlacing. Hence $\theta$ is a zero of $F(z)$ of multiplicity at most two. 
\end{proof}

The following theorem describes interlacing relationships betwen Stieltjes polynomials of the same degree. 
\begin{theorem}\label{sames}
Let $T$ be a non-degenerate hyperbolicity preserver with nonnegative Fuchs index $r$ and let 
$B,C \in \binom {[n+r]} n$. If $B \rightarrow C$ then $f_{B,n} \ll f_{C,n}$. 

Moreover, if there is no $\theta \in \RR$ for which $Q_1(\theta) = \cdots = Q_n(\theta)=0$, then 
$f_{B,n}$ and  $f_{C,n}$ are co-prime, i.e., their zeros strictly interlace. 
\end{theorem}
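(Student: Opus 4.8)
\textbf{Proof proposal for Theorem \ref{sames}.}

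The plan is to realize both $f_{B,n}$ and $f_{C,n}$ as limits of the monotone iteration scheme from Theorem \ref{Main2}, and to propagate the proper-position relation $\ll$ through every step of the iteration. First I would fix the common starting point: let $a$ be the smallest zero of $Q_e$, $e=\min(n,N)$, and set $\alpha^0=\beta^0=(a,\ldots,a)\in I^n$ as in the proof of Theorem \ref{Main2}, so that the iterates $\alpha^k$ (zeros of the $k$-th Stieltjes approximant for zero pattern $(A,B)$) converge to the zero vector of $f_{B,n}$, and similarly $\beta^k\to$ zeros of $f_{C,n}$. The key point is the combinatorial reformulation of $B\rightarrow C$ given just before the statement: if $\xx\ll\yy$ in $\RR^{n+r}$ then $\xx_B\ll\yy_C$. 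So I would use the map $\Phi:\RR^{n+r}_{\le}\to\RR^{n+r}_{\le}$ sending the weakly increasing zero vector of a monic degree-$n$ hyperbolic $h$ to the (weakly increasing) zero vector of $T(h)$, which is hyperbolic of degree $n+r$ by Proposition \ref{posmono}; by Proposition \ref{properpres} (via the same $\gamma^i$-chain argument used in the proof of Theorem \ref{Main2}) $\Phi$ is monotone for $\ll$, i.e.\ $\xx\ll\yy$ (both weakly increasing) implies $\Phi(\xx)\ll\Phi(\yy)$. Then $\phi_B(\alpha)=(\Phi(\alpha))_B$ and $\phi_C(\beta)=(\Phi(\beta))_C$, and one shows by induction on $k$ that $\alpha^k\ll\beta^k$: the base case is $\alpha^0=\beta^0$ (and a constant vector is $\ll$ itself), and if $\alpha^k\ll\beta^k$ then $\Phi(\alpha^k)\ll\Phi(\beta^k)$ in $\RR^{n+r}$, hence by $B\rightarrow C$ we get $\alpha^{k+1}=(\Phi(\alpha^k))_B\ll(\Phi(\beta^k))_C=\beta^{k+1}$. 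Passing to the limit (the $\ll$ relation is closed) gives that the zeros of $f_{B,n}$ and $f_{C,n}$ interlace with $f_{B,n}\ll f_{C,n}$.

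For the second statement, I would argue that under the hypothesis that no $\theta$ kills $Q_1,\ldots,Q_n$ simultaneously, a common zero of $f_{B,n}$ and $f_{C,n}$ is impossible. Suppose $\theta$ is a common zero; by Theorem \ref{Main}(4) (which applies precisely under this hypothesis) both $f_{B,n}$ and $f_{C,n}$ have only simple zeros, so $\theta$ is simple in each. Then $f_{B,n}\ll f_{C,n}$ forces $\theta$ to occupy ``adjacent'' slots in the combined zero list, and one wants a contradiction with $B\rightarrow C$ being a \emph{strict} interlacing condition at that position. Concretely, apply Lemma \ref{folklore} to $f=f_{B,n}$, $g=f_{C,n}$: the polynomial $F=g'(\theta)f-f'(\theta)g$ has $\theta$ as a zero of multiplicity exactly two (it is not identically zero since $f,g$ are not proportional). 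On the other hand $F$ is, up to scalars, $v_{A,n}$-related — or rather I would track where $F$ sits: the span of $f_{B,n}$ and $f_{C,n}$ consists of hyperbolic polynomials by Lemma \ref{obr}, and $F$ is the unique (up to scalar) member of this pencil vanishing at $\theta$. The contradiction comes from the fact that $B\rightarrow C$ with $\theta$ in a shared slot would force the inequality $2[A]\ll 2[B]+\one$ to degenerate, contradicting that $A,C$ realize this strict pattern for all weakly increasing $\xx\ll\yy$. I would spell this out by choosing the perturbation direction in the iteration: starting instead from $\alpha^0$ slightly below $\beta^0$ in a way compatible with $\ll$, monotonicity gives strict separation at the limit unless the pattern collapses.

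The main obstacle I anticipate is the strictness argument in the second paragraph: the monotone-iteration method naturally yields only the weak relation $\ll$, and upgrading to strict interlacing (co-primality) requires genuinely using the non-degeneracy of $Q_1,\ldots,Q_n$ together with the structure of the relation $\rightarrow$, not just a limiting argument. The cleanest route is probably the one via Lemma \ref{folklore} combined with Proposition \ref{multi}: if $\theta$ is a common simple zero of $f_{B,n}$ and $f_{C,n}=:g$, form $F=g'(\theta)f_{B,n}-f_{B,n}'(\theta)g$, which vanishes to order exactly two at $\theta$; since $F$ is a real linear combination of $f_{B,n}$ and $g$ and $T$ is linear, $T(F)=v\cdot(g'(\theta)f_{B,n}-f_{B,n}'(\theta)g$-type combination$)$ — one needs that $v_{A,n}=v_{A',n}$ would follow, forcing $\theta$ to also be a double zero of the corresponding Van Vleck/Stieltjes relation and hence, by Proposition \ref{multi}, a zero of $F$ of order $\ge 2+t\ge 4$, contradicting order exactly two. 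Making the bookkeeping of zero patterns in this pencil argument precise is where the real work lies; everything else is a routine induction plus a limit.
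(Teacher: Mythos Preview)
Your argument for the first half is correct and is exactly the paper's: run the iteration of Theorem~\ref{Main2} for both zero patterns from the common start $(a,\ldots,a)$, and propagate $\ll$ through each step using Proposition~\ref{properpres} together with the defining property of $B\rightarrow C$; then pass to the limit. (One small over-elaboration: you don't need the $\gamma^i$-chain from the proof of Theorem~\ref{Main2} here---that chain was used to upgrade $\ll$-preservation to $\le$-preservation, whereas for this theorem you only ever need $\ll$, which Proposition~\ref{properpres} gives directly.)

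For the co-primality half there is a genuine gap, and you have the two key tools (Lemma~\ref{folklore} and Proposition~\ref{multi}) applied in the wrong order. You form $F=f_{C,n}'(\theta)f_{B,n}-f_{B,n}'(\theta)f_{C,n}$, which has $\theta$ as a zero of multiplicity exactly two, and then try to show that $T(F)$ also has $\theta$ to order at least two so that Proposition~\ref{multi} bites. But $T(F)=f_{C,n}'(\theta)\,v_{B',n}f_{B,n}-f_{B,n}'(\theta)\,v_{C',n}f_{C,n}$ has $(T(F))'(\theta)=f_{B,n}'(\theta)f_{C,n}'(\theta)\bigl(v_{B',n}(\theta)-v_{C',n}(\theta)\bigr)$, and there is no reason the two Van Vleck polynomials should agree at $\theta$. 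Neither the perturbation idea nor the ``$v_{A,n}=v_{A',n}$'' comment fixes this.

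The paper's move is to reverse the direction: apply Lemma~\ref{folklore} \emph{upstairs}, to $T(f_{B,n})=v_{B',n}f_{B,n}$ and $T(f_{C,n})=v_{C',n}f_{C,n}$. These interlace (by Part~1 and Proposition~\ref{properpres}) and share $\theta$ as a simple zero (Theorem~\ref{Main}(4) makes $\theta$ simple in each $f$ and foreign to each $v$). Lemma~\ref{folklore} then gives a specific $a$ for which $T(f_{B,n}-af_{C,n})$ has $\theta$ of multiplicity exactly two. Now Proposition~\ref{multi} applies to the hyperbolic polynomial $f_{B,n}-af_{C,n}$ and forces $\theta$ to occur there with multiplicity at least $2+t\ge 3$. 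This contradicts Lemma~\ref{folklore} (equivalently Obreschkoff, Lemma~\ref{obr}) applied to $f_{B,n},f_{C,n}$, since every nonzero member of their pencil has $\theta$ of multiplicity at most two. (Your ``$\ge 4$'' should be ``$\ge 3$''; three is already enough.) So the fix is simply: choose the pencil coefficient from the $T$-images, not from the $f$'s themselves.
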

\begin{proof}
Let $\{f_{B,n}^i\}_{i=0}^\infty$ and $\{f_{C,n}^i\}_{i=0}^\infty$ be the sequences of polynomials constructed as in Theorem \ref{Main2} for $f_{B,n}$ and $f_{C,n}$, respectively. We prove by induction that $f_{B,n}^i \ll f_{C,n}^i$ for all 
integers $i \geq 0$. The induction start is trivial since $f_{B,n}^0 = f_{C,n}^0$. 

Suppose that $f_{B,n}^i \ll f_{C,n}^i$. Then, by Proposition \ref{properpres}, we have $T(f_{B,n}^i) \ll T(f_{C,n}^i)$. The polynomials $f_{B,n}^{i+1}$ and $f_{C,n}^{i+1}$ are obtained by keeping the zeros of  $T(f_{B,n}^i)$ and $T(f_{C,n}^i)$ that are labelled by $B$ and $C$, respectively. Hence, by the (alternative) definition of $\rightarrow$ we have $f_{B,n}^{i+1} \ll f_{C,n}^{i+1}$. 

The first part of the theorem now follows by letting $i \rightarrow \infty$. 

To prove the second part assume that $f_{B,n}(\gamma)= f_{C,n}(\gamma)=0$ for some $\gamma \in \RR$. By Theorem \ref{Main}, $f_{B,n}$ and $v_{B',n}$ are simple-rooted and co-prime. The same holds for $f_{C,n}$ and $v_{C',n}$. By Lemma \ref{folklore}, $\gamma$ is a zero of multiplicity exactly two of the polynomial $T(f_{B,n} - a f_{C,n})=v_{B',n}f_{B,n} - a v_{C',n}f_{C,n}$, where 
$$
a= \frac { (v_{B',n}f_{B,n})'(\gamma)}{ (v_{C',n}f_{C,n})'(\gamma)}.
$$
But then, by Proposition \ref{multi},  $\gamma$ is a zero of multiplicity at least three of the polynomial $f_{B,n} - a f_{C,n}$, which contradicts Lemma \ref{folklore}. 
\end{proof}
\begin{remark}
Given $B=\{b_1< \cdots < b_n\} \subseteq [n+r]$ there are exactly $b_1(b_2-b_1)\cdots (b_n-b_{n-1})$ sets 
$A \in \binom {[n+r]} n$ for which $A \rightarrow B$. Indeed, to construct $A$ we pick an integer from the interval $[b_k+1, b_{k+1}]$, independently for each $k$. By a counting argument there are exactly $\binom {2n+r} r$ relations (including the trivial ones) in $\binom {[n+r]} n$.
\end{remark}

Similarly, a pair $(\xx, \yy)$ of real vectors $\xx = (x_1, \ldots, x_{k})$ and $\yy = (y_1, \ldots, y_{k+1})$ are said to be in {\em proper position}, written $\xx \ll \yy$, if 
$$ y_1 \leq x_1 \leq y_2 \leq x_2 \leq \cdots \leq x_k \leq y_{k+1}.$$ 
The relation $\rightarrow$ extends naturally to concern $\binom {[n+r]} n \times \binom {[n+1+r]} {n+1}$: $A \rightarrow B$ if 
 $2[A] \ll 2[B]+\mathbf{1}$. Again, $A \rightarrow B$ if and only if 
  for  all $\xx \in \RR^{n+r}$ and $\yy \in \RR^{n+r+1}$ with $\xx \ll \yy$ we have $\xx_A \ll \yy_B$. 

The next theorem describes interlacing relationships between Stieltjes polynomials of consecutive degrees. Its proof is very similar to that of Theorem \ref{sames} and is  therefore omitted. 
\begin{theorem}\label{cons}
Let $T$ be a non-degenerate hyperbolicity preserver with nonnegative Fuchs index $r$ and let 
$B \in \binom {[n+r]} n$ and $C \in \binom {[n+1+r]} {n+1}$. If $B \rightarrow C$ then $f_{B,n} \ll f_{C,n+1}$. 

Moreover, if there is no $\theta \in \RR$ for which $Q_1(\theta) = \cdots = Q_n(\theta)=0$, then 
$f_{B,n}$ and  $f_{C,n}$ are co-prime, i.e., their zeros strictly interlace. 
\end{theorem}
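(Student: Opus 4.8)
Since the statement promises a proof ``very similar to that of Theorem~\ref{sames}'', the plan is to transcribe that argument, the one genuinely new ingredient being that the two running sequences now have degrees $n$ and $n+1$ and so must be launched from a common base point (and one should read $f_{C,n+1}$ for the misprinted $f_{C,n}$ in the second sentence). Put $e=\min(n,N)$ and $e'=\min(n+1,N)$; then $M\le e\le e'\le N$, and by non‑degeneracy $\deg Q_k=\deg Q_M+(k-M)$ for all such $k$, so $Q_e,Q_{e'}\not\equiv0$. Since $Q_M\ll Q_{M+1}\ll\cdots\ll Q_N$ by Lemma~\ref{coeff} and consecutive coefficients have degrees differing by one, the zeros of $Q_{e'}$ surround those of $Q_e$, whence $I(Q_e)\subseteq I(Q_{e'})$; let $a$ be the smallest zero of $Q_{e'}$, that is, the left endpoint of $I(Q_{e'})$. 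I would first record a harmless strengthening of Theorem~\ref{Main2}: the map $\phi$ constructed in the proofs of Theorems~\ref{Main} and~\ref{Main2} is order‑preserving on weakly increasing vectors, and since its fixed point is unique by Theorem~\ref{Main} the iterates $\phi^k(\alpha)$ converge to that fixed point for \emph{every} $\alpha$ in the box $I(Q_{e'})^n$ --- the orbits of the bottom and of the top of the box both converge to it, and an arbitrary $\alpha$ is squeezed between them, while Lemma~\ref{inclusion} shows $\phi$ maps $I(Q_{e'})^n$ into itself even for the degree‑$n$ problem because $I(Q_eh)\subseteq I(Q_{e'})$ whenever the zeros of $h$ lie in $I(Q_{e'})$. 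Consequently the sequence $\{f_{B,n}^i\}_{i=0}^\infty$ obtained from the recursion of Theorem~\ref{Main2} started at $f_{B,n}^0=(z-a)^n$ still converges to $f_{B,n}$, while $\{f_{C,n+1}^i\}_{i=0}^\infty$ started at $f_{C,n+1}^0=(z-a)^{n+1}$ is exactly the sequence of Theorem~\ref{Main2} in degree $n+1$ and converges to $f_{C,n+1}$.

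Next I would prove $f_{B,n}^i\ll f_{C,n+1}^i$ by induction on $i\ge0$. The base case holds since $W[(z-a)^n,(z-a)^{n+1}]=-(z-a)^{2n}\le0$, so $(z-a)^n\ll(z-a)^{n+1}$. For the inductive step, from $f_{B,n}^i\ll f_{C,n+1}^i$ Proposition~\ref{properpres} gives $T(f_{B,n}^i)\ll T(f_{C,n+1}^i)$, these being nonzero of degrees $n+r$ and $n+1+r$. Let $\xx\in\RR^{n+r}$ and $\yy\in\RR^{n+1+r}$ be their increasing zero vectors; then $\xx\ll\yy$, so the hypothesis $B\rightarrow C$ together with the mixed‑degree form of the alternative description of $\rightarrow$ yields $\xx_B\ll\yy_C$. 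But $\xx_B$ and $\yy_C$ are the increasing zero vectors of $f_{B,n}^{i+1}$ and $f_{C,n+1}^{i+1}$, since these polynomials are formed by retaining the zeros of $T(f_{B,n}^i)$ and $T(f_{C,n+1}^i)$ labelled by $B$ and $C$; hence $f_{B,n}^{i+1}\ll f_{C,n+1}^{i+1}$. Letting $i\to\infty$ gives $f_{B,n}\ll f_{C,n+1}$, which is the first assertion.

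For the co‑primality statement I would reproduce the argument of Theorem~\ref{sames}. Suppose $\gamma\in\RR$ were a common zero of $f_{B,n}$ and $f_{C,n+1}$. By Theorem~\ref{Main} --- using that no real $\theta$ annihilates all the relevant coefficients $Q_k$ --- the pairs $(v_{B',n},f_{B,n})$ and $(v_{C',n+1},f_{C,n+1})$, where $B'=[n+r]\setminus B$ and $C'=[n+1+r]\setminus C$, are simple‑rooted and co‑prime, so $\gamma$ is a simple common zero of $T(f_{B,n})=v_{B',n}f_{B,n}$ and $T(f_{C,n+1})=v_{C',n+1}f_{C,n+1}$, whose zeros interlace (pass to the limit in $T(f_{B,n}^i)\ll T(f_{C,n+1}^i)$). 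Setting $h=\bigl(T(f_{C,n+1})\bigr)'(\gamma)\,f_{B,n}-\bigl(T(f_{B,n})\bigr)'(\gamma)\,f_{C,n+1}$, we get $T(h)=\bigl(T(f_{C,n+1})\bigr)'(\gamma)\,T(f_{B,n})-\bigl(T(f_{B,n})\bigr)'(\gamma)\,T(f_{C,n+1})$ by linearity, and $\deg h=n+1$ since $\bigl(T(f_{B,n})\bigr)'(\gamma)\neq0$. By Lemma~\ref{folklore} applied to $T(f_{B,n})$ and $T(f_{C,n+1})$, $\gamma$ is a zero of $T(h)$ --- which has degree $n+1+r$, hence is not identically zero --- of multiplicity exactly two; Proposition~\ref{multi} with $d=2$ then forces $(z-\gamma)^{2+t}\mid h$ for some $t\ge1$, so $\gamma$ is a zero of $h$ of multiplicity at least three. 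On the other hand $f_{B,n}$ and $f_{C,n+1}$ have interlacing zeros with $\gamma$ a common simple zero by the first part, so Lemma~\ref{folklore} forces every nonzero linear combination of $f_{B,n}$ and $f_{C,n+1}$ that vanishes at $\gamma$ to order at least two to vanish there to order exactly two; since $h\not\equiv0$, this is the required contradiction. The only genuine obstacle is the first paragraph --- verifying that both recursions may be launched from the common point $a$ and still converge to the intended polynomials; granting this, the rest is a faithful transcription of the proof of Theorem~\ref{sames} with $\ll$ and $\rightarrow$ read in their mixed‑degree senses.
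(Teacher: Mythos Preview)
Your proof follows exactly the template the paper intends: run the two Theorem~\ref{Main2} iterations side by side, propagate $\ll$ via Proposition~\ref{properpres} and the alternative description of $\rightarrow$, and then repeat the Lemma~\ref{folklore}/Proposition~\ref{multi} contradiction for strict interlacing. You have also correctly isolated the one place where the consecutive-degree case is not a verbatim copy of Theorem~\ref{sames}: the two iterations want to start at $(z-a_n)^n$ and $(z-a_{n+1})^{n+1}$ with $a_{n+1}\le a_n$, and when $n<N$ these two powers are \emph{not} in proper position, so a common launching point must be arranged.

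The gap is in your resolution of this point. You invoke Theorem~\ref{Main} to conclude that $\phi$ has a unique fixed point, so that any starting point in $I(Q_{e'})^n$ iterates to $f_{B,n}$. But Theorem~\ref{Main} carries the hypothesis that $Q_0,\dots,Q_n$ have no common real zero, and the first assertion of Theorem~\ref{cons} does \emph{not} assume this. Without that hypothesis no uniqueness statement is available; $\phi$ may have several fixed points with zero pattern $(A,B)$, your top-and-bottom squeeze collapses, and you cannot conclude that the degree-$n$ iteration launched from $a_{n+1}$ reaches the polynomial $f_{B,n}$ of Definition~\ref{def} (which is by definition the limit launched from $a_n$). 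The paper, by omitting the proof as ``very similar,'' appears to gloss over the same issue, which simply does not arise in the equal-degree case. Under the extra hypothesis of the second clause your argument is complete, since Theorem~\ref{Main} then does give uniqueness; the bare first assertion as stated requires an additional idea (for instance, showing directly that the smallest fixed point in $I(Q_{e'})^n$ coincides with the smallest fixed point in $I(Q_e)^n$, or a perturbation argument).

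Your co-primality argument is correct as written.
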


We have the following theorem describing interlacing properties of Van Vleck polynomials that have  Stieltjes polynomials of  the same degree. 

\begin{theorem}\label{samev}
Let $T$ be a non-degenerate hyperbolicity preserver with nonnegative Fuchs index $r$ and let 
$A \in \binom {[n+r-1]} r$. Then $v_{A,n} \ll v_{A+\mathbf{1},n}$, where $A+\mathbf{1}:= \{a+1, a\in A\}$. 

Moreover, if there is no $\theta \in \RR$ for which $Q_1(\theta) = \cdots = Q_n(\theta)=0$, then 
$v_{A,n}$ and  $v_{A+\mathbf{1},n}$ are co-prime, i.e., their zeros strictly interlace. 
\end{theorem}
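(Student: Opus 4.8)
The plan is to mimic the two-part structure of the proof of Theorem~\ref{sames}, transferring the argument from Stieltjes polynomials to Van Vleck polynomials via the recursive construction of Theorem~\ref{Main2}. First I would set up the two approximating sequences: let $\{f_{A,n}^i\}_{i=0}^\infty$ with the accompanying $\{v_{A',n}^i\}_{i=1}^\infty$ (where $A' = [n+r]\setminus A$) be the sequences from Theorem~\ref{Main2} converging to $(v_{A,n}, f_{A,n})$, and similarly $\{f_{(A+\mathbf{1})',n}^i\}$, $\{v_{A+\mathbf{1},n}^i\}$ for the pair indexed by $A+\mathbf{1}$. The key observation is a relation between the ordered partitions: if $A \subseteq [n+r-1]$ has $|A| = r$ and we set $B = [n+r]\setminus A \in \binom{[n+r]}{n}$ and $C = [n+r]\setminus(A+\mathbf{1})$, then one checks combinatorially that $B \rightarrow C$ in the sense of Definition~\ref{def}, and moreover the ``complementary'' relation $A \rightarrow A+\mathbf{1}$ holds for the $r$-element index sets too — i.e.\ deleting the coordinates labelled by $C$ resp.\ $B$ from two vectors $\xx \ll \yy$ leaves $\xx_A \ll \yy_{A+\mathbf{1}}$. (The shift by $\mathbf{1}$ is exactly what makes the extracted subsequences remain properly positioned; this is the content of the alternative description of $\rightarrow$ given before Lemma~\ref{folklore}.)

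With that in hand, the induction is routine. The base case uses $f_{A,n}^0 = f_{(A+\mathbf{1})',n}^0 = (z-a)^n$ — but note the two sequences may use different endpoints $a$; actually both use the smallest zero of $Q_e$, so they coincide, and then the first applications of $T$ already produce properly positioned pairs. For the inductive step, assuming $f_{A,n}^i \ll f_{(A+\mathbf{1})',n}^i$, Proposition~\ref{properpres} gives $T(f_{A,n}^i) \ll T(f_{(A+\mathbf{1})',n}^i)$; since these two hyperbolic polynomials of degree $n+r$ have properly positioned zeros and we extract from them the zeros labelled by $A$ and by $A+\mathbf{1}$ respectively (for the Van Vleck parts) and by $B$ and $C$ (for the Stieltjes parts), the combinatorial relations above yield both $v_{A',n}^{i+1} \ll v_{A+\mathbf{1},n}^{i+1}$ (wait — indices: $v$ is the factor supported on $A$ resp.\ $A+\mathbf{1}$) and $f_{A,n}^{i+1} \ll f_{(A+\mathbf{1})',n}^{i+1}$, so the induction closes. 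Letting $i \to \infty$ and invoking the convergence in Theorem~\ref{Main2}, together with the fact that $\ll$ is a closed relation on pairs of hyperbolic polynomials of fixed degree, gives $v_{A,n} \ll v_{A+\mathbf{1},n}$.

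For the second (co-primeness) part I would copy the argument in the proof of Theorem~\ref{sames} almost verbatim, but applied to the Van Vleck factors. Suppose $v_{A,n}(\gamma) = v_{A+\mathbf{1},n}(\gamma) = 0$. By Theorem~\ref{Main}(4) (using the hypothesis on $Q_1,\dots,Q_n$) each $v$ is simple-rooted and co-prime to its Stieltjes partner, so $\gamma$ is a simple zero of $v_{A',n}f_{A,n}$ (abusing notation, the relevant product $T(f_{A,n})$) and likewise for the other pair; wait — one must be careful: $v_{A,n}$ shares no zero with $f_{A,n}$, so $\gamma$ is a zero of $T(f_{A,n}) = v_{A,n}f_{A,n}$ of multiplicity exactly one, and similarly for $T(f_{A+\mathbf{1},n})$. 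Applying Lemma~\ref{folklore} to $f = T(f_{A,n})$ and $g = T(f_{A+\mathbf{1},n})$ — whose zeros interlace by the first part — shows $\gamma$ is a zero of multiplicity exactly two of $g'(\gamma)T(f_{A,n}) - f'(\gamma)T(f_{A+\mathbf{1},n})$, and writing this as $T\big(g'(\gamma)f_{A,n} - f'(\gamma)f_{A+\mathbf{1},n}\big)$, Proposition~\ref{multi} forces $\gamma$ to be a triple zero of $g'(\gamma)f_{A,n} - f'(\gamma)f_{A+\mathbf{1},n}$, contradicting Lemma~\ref{folklore} applied to $f_{A,n}$ and $f_{A+\mathbf{1},n}$ (which interlace by Theorem~\ref{cons}, and are co-prime at $\gamma$ since... hmm, here one needs that $\gamma$ is not a common zero of $f_{A,n}$ and $f_{A+\mathbf{1},n}$, or else handle that case separately).

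The main obstacle I anticipate is the bookkeeping in the last paragraph: one has to untangle which polynomials play the roles of $f$ and $g$ in each invocation of Lemma~\ref{folklore} and verify the ``multiplicity one in each'' hypothesis at $\gamma$, in particular ruling out (or separately treating) the subcase where $\gamma$ is simultaneously a zero of the two Stieltjes polynomials $f_{A,n}$ and $f_{A+\mathbf{1},n}$. If that subcase arises, one would instead run the Lemma~\ref{folklore}/Proposition~\ref{multi} dichotomy directly on the Stieltjes pair, mirroring Theorem~\ref{sames}. The first part carries essentially no difficulty beyond correctly identifying the combinatorial relation between $A$, $A+\mathbf{1}$ and their complements — which is why the authors were content to state this theorem immediately after Theorems~\ref{sames} and~\ref{cons}.
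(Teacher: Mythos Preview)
Your overall plan is the paper's idea, but there is a concrete error in the combinatorial step, and the paper's execution is shorter.

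For the first part, the paper does not re-run the induction from Theorem~\ref{Main2}; it simply invokes Theorem~\ref{sames} once. The key combinatorial fact is $(A+\mathbf{1})' \rightarrow A'$, \emph{not} $A' \rightarrow (A+\mathbf{1})'$ as you claim. For instance with $n+r=3$, $r=1$, $A=\{1\}$ one has $A'=\{2,3\}$, $(A+\mathbf{1})'=\{1,3\}$, and $2[A']=(4,6)$ is not $\ll (3,7)=2[(A+\mathbf{1})']+\mathbf{1}$, whereas $(2,6)\ll(5,7)$ does hold. With the correct direction, Theorem~\ref{sames} gives $f_{(A+\mathbf{1})'}\ll f_{A'}$; one application of $T$ (Proposition~\ref{properpres}) yields $v_{A+\mathbf{1}}f_{(A+\mathbf{1})'}\ll v_A f_{A'}$, and from $x_1\le y_1\le x_2\le\cdots$ one reads off $y_a\le x_{a+1}\le y_{a'}$ for consecutive $a<a'$ in $A$, i.e.\ $v_A\ll v_{A+\mathbf{1}}$. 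Your direction would instead require $\xx_A\ll\yy_{A+\mathbf{1}}$ from $\xx\ll\yy$, which fails whenever $A$ contains two consecutive integers (you would need $y_{a+1}\le x_{a+1}$, the reverse of what $\xx\ll\yy$ gives). So the inductive extraction you propose does not close as written; reversing the direction fixes it, and then the induction is just a reproof of Theorem~\ref{sames}.

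For the second part your obstacle is illusory. Under the hypothesis, Theorem~\ref{Main}(4) makes each $v$ co-prime to its $f$, so if $v_{A,n}(\gamma)=v_{A+\mathbf{1},n}(\gamma)=0$ then $\gamma$ is a zero of \emph{neither} $f_{A'}$ nor $f_{(A+\mathbf{1})'}$. After Lemma~\ref{folklore} (applied to the interlacing pair $T(f_{(A+\mathbf{1})'})$, $T(f_{A'})$) and Proposition~\ref{multi} force $\gamma$ to be at least a triple zero of some nonzero combination $\alpha f_{A'}+\beta f_{(A+\mathbf{1})'}$, this already contradicts Lemma~\ref{obr}: that combination has zeros interlacing those of $f_{A'}$, hence $\gamma$ can occur in it with multiplicity at most one. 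There is no subcase to treat; the worry about a common zero of the two Stieltjes polynomials never arises.
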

\begin{proof}
Clearly $(A+\mathbf{1})' \rightarrow A'$, where $(A+\mathbf{1})'=[n+r]\setminus (A+\mathbf{1})$ and $A' = [n+r]\setminus A$. By Theorem \ref{sames}, $f_{(A+\mathbf{1})' } \ll f_{A' }$ and by 
Proposition \ref{properpres}
$$
v_{A+\mathbf{1}} f_{(A+\mathbf{1})' }= T(f_{(A+\mathbf{1})' }) \ll T(f_{A' }) = v_Af_{A' }.
$$
Let $x_1 \leq \cdots \leq x_{n+r}$ and $y_1 \leq \cdots \leq y_{n+r}$ be the zeros of $v_{A+\mathbf{1}} f_{(A+\mathbf{1})' }$ and $v_Af_{A' }$, respectively.  Then, for $a \in A$, 
$$
x_1 \leq y_1 \leq \cdots \leq x_a \leq y_a \leq x_{a+1} \leq y_{a+1} \leq \cdots \leq x_{n+r} \leq y_{n+r}. 
$$  
Since the zeros of $v_A$ are $\{ y_a : a \in A\}$ and the zeros of $v_{A+\mathbf{1}}$ are $\{ x_{a+1}: a \in A\}$, it follows that $v_A \ll v_{A+\mathbf{1}}$. 

The proof of the strict interlacing of the zeros follows just as in the proof of Theorem \ref{sames}. 
\end{proof}

The next theorem describes interlacing properties of  Van Vleck polynomials that have Stieltjes polynomials of consecutive degrees. The proof is very similar to that of Theorem \ref{samev} and is therefore omitted.

\begin{theorem}\label{conv}
Let $T$ be a non-degenerate hyperbolicity preserver with nonnegative Fuchs index $r$ and let 
$A \in \binom {[n+r]} {r}$. Then $v_{A,n+1}\ll v_{A,n} \ll v_{A+\mathbf{1},n+1}$.

Moreover, if there is no $\theta \in \RR$ for which $Q_1(\theta) = \cdots = Q_n(\theta)=0$, then 
the interlacing of the zeros is strict.
\end{theorem}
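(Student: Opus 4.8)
The plan is to mimic the proof of Theorem \ref{samev} essentially verbatim, substituting the consecutive-degree relation for the same-degree one. The point is that Theorem \ref{samev} was reduced, via $(A+\one)' \rightarrow A'$ and Proposition \ref{properpres}, to the same-degree interlacing of Stieltjes polynomials (Theorem \ref{sames}); here the analogue should be reduced to the \emph{consecutive}-degree interlacing (Theorem \ref{cons}). First I would observe that the statement $v_{A,n+1}\ll v_{A,n} \ll v_{A+\one,n+1}$ is really two separate claims, and each is of the same shape: compare a Van Vleck polynomial of Stieltjes-degree $n$ with one of Stieltjes-degree $n+1$.

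For the inequality $v_{A,n}\ll v_{A+\one,n+1}$ I would argue as follows. Set $A'=[n+r]\setminus A \in \binom{[n+r]}{n}$ and $(A+\one)'=[n+1+r]\setminus (A+\one)\in \binom{[n+1+r]}{n+1}$. One checks directly from the definition of $\rightarrow$ extended to $\binom{[n+r]}{n}\times\binom{[n+1+r]}{n+1}$ that $A' \rightarrow (A+\one)'$: adding a single new smallest (or appropriately placed) index and shifting by $\one$ is exactly the combinatorial move that the consecutive-degree relation encodes. By Theorem \ref{cons}, $f_{A',n}\ll f_{(A+\one)',n+1}$, and then by Proposition \ref{properpres}
$$
v_{A,n}f_{A',n}= T(f_{A',n}) \ll T(f_{(A+\one)',n+1}) = v_{A+\one,n+1}f_{(A+\one)',n+1}.
$$
Now the products on the two sides have degrees $(n+r)+n = 2n+r$ and $(n+r)+(n+1)=2n+r+1$, so the two zero-strings are themselves in the degree-differing proper-position relation; listing the zeros $x_1\le\cdots\le x_{2n+r}$ of the left product and $y_1\le\cdots\le y_{2n+r+1}$ of the right and tracking which indices carry the $v$-zeros versus the $f$-zeros (exactly as in the displayed chain of inequalities in the proof of Theorem \ref{samev}), one reads off that the $v_{A,n}$-zeros and the $v_{A+\one,n+1}$-zeros satisfy $v_{A,n}\ll v_{A+\one,n+1}$. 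The inequality $v_{A,n+1}\ll v_{A,n}$ is handled symmetrically: here $A$ is used \emph{unchanged} for both, $A'=[n+r+1]\setminus A$ versus $A''=[n+r]\setminus A$, one verifies $A''\rightarrow A'$ (the smaller-degree one maps to the larger), applies Theorem \ref{cons} and Proposition \ref{properpres} again, and reads off the conclusion from the zero-interleaving bookkeeping.

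For the strict-interlacing addendum I would invoke the same mechanism as in Theorem \ref{sames}: if $v_{A,n}$ and $v_{A+\one,n+1}$ shared a zero $\gamma$, then since each $v$ is simple-rooted and co-prime with its Stieltjes partner by Theorem \ref{Main} (whose hypothesis is exactly the stated condition on $Q_1,\dots,Q_n$), Lemma \ref{folklore} would force $\gamma$ to be a zero of multiplicity exactly two of an appropriate real combination $T(f_{A',n}-a f_{(A+\one)',n+1}) = v_{A,n}f_{A',n} - a\, v_{A+\one,n+1}f_{(A+\one)',n+1}$, whence Proposition \ref{multi} gives multiplicity at least three in $f_{A',n}-a f_{(A+\one)',n+1}$, contradicting Lemma \ref{folklore}. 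One small care point is that $f_{A',n}$ and $f_{(A+\one)',n+1}$ have different degrees $n$ and $n+1$, so the relevant combination is not literally of the form $T(f-ag)$ with $f,g$ of equal degree; but Lemma \ref{folklore} and Proposition \ref{multi} are stated for arbitrary hyperbolic polynomials with interlacing zeros, so this is harmless.

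The main obstacle I anticipate is purely the combinatorial verification that the relevant set maps ($A'\rightarrow(A+\one)'$ and $A''\rightarrow A'$) do hold under the extended relation $\rightarrow$ on $\binom{[n+r]}{n}\times\binom{[n+1+r]}{n+1}$ — i.e. that $2[A']\ll 2[(A+\one)']+\one$ in the degree-differing sense, and likewise for the other pair. This is elementary but must be done carefully because the new index set $[n+1+r]$ has one more element than $[n+r]$, so one has to check the boundary terms of the interleaving chain ($y_1\le x_1$ at the bottom, $x_k\le y_{k+1}$ at the top) rather than just the interior. Everything else is a transcription of the proofs of Theorems \ref{sames} and \ref{samev}, which is why the paper elects to omit it.
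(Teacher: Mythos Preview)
Your proposal is correct and follows exactly the route the paper has in mind: replace the same-degree relation $(A+\one)'\rightarrow A'$ used in Theorem \ref{samev} by the consecutive-degree relations (in your notation) $A'\rightarrow (A+\one)'$ and $A''\rightarrow A'$, invoke Theorem \ref{cons} in place of Theorem \ref{sames}, apply Proposition \ref{properpres}, and read off the Van Vleck interlacing from the indexed zero chain; the combinatorial checks you flag are indeed the only nontrivial point and go through using the alternative description of $\rightarrow$ (for each one the boundary inequalities $y_1\le x_{b_1}$ and $x_{b_n}\le y_{n+r+1}$ are immediate). One minor correction: in the strict-interlacing step the contradiction is not literally with Lemma \ref{folklore} (since $\gamma$ is a zero of the $v$'s, not of the $f$'s), but rather with Lemma \ref{obr} together with the simplicity of the Stieltjes polynomials from Theorem \ref{Main}(4) --- exactly the mechanism behind the proof of Lemma \ref{folklore}.
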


Suppose that $T=\sum_{k=M}^N Q_k(z) D^k$ is a non-degenerate hyperbolicity preserver with Fuchs index  $r=1$, and that $Q_M(\theta)=\cdots = Q_n(\theta)$ for  no $\theta \in \RR$. Then the Van Vleck polynomials are of degree one and there are exactly $n+1$ of them. Define the $n$th \emph{spectral polynomial}, $p_n(z)$, to be the monic polynomial whose zeros are precisely the zeros of the $n+1$ Van Vleck polynomials. 

\begin{corollary}\label{vvv}
Suppose that $T=\sum_{k=M}^N Q_k(z) D^k$ is a non-degenerate hyperbolicity preserver with Fuchs index  $r=1$, and that $Q_M(\theta)=\cdots = Q_n(\theta)$ for no $\theta \in \RR$. Then 
the zeros of $p_n$ and $p_{n+1}$ interlace. Moreover $p_n$ and $p_{n+1}$ have no zeros in common. 
\end{corollary}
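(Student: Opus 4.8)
The strategy is to deduce the interlacing of $p_n$ and $p_{n+1}$ from the interlacing statements for individual Van Vleck polynomials already established in Theorems \ref{samev} and \ref{conv}, by carefully tracking which zeros each $p_n$ collects. Since $r=1$, the index set $\binom{[n+1]}{1}$ is just $\{1,2,\ldots,n+1\}$, so the $n+1$ Van Vleck polynomials of degree $1$ can be written $v_{1,n}, v_{2,n}, \ldots, v_{n+1,n}$, where $v_{a,n}$ has its single zero in the $a$-th position among the $n+1$ zeros of $v_{a,n}f_{[n+1]\setminus\{a\},n}$. By Theorem \ref{samev} (with $A=\{a\}\subseteq[n]$) we have $v_{a,n}\ll v_{a+1,n}$ for $a=1,\ldots,n$, and by the strict-interlacing clause (which applies under the standing hypothesis on the $Q_k(\theta)$) these zeros are distinct. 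Thus if $\rho_{a,n}$ denotes the zero of $v_{a,n}$, we get $\rho_{1,n}<\rho_{2,n}<\cdots<\rho_{n+1,n}$, and $p_n(z)=\prod_{a=1}^{n+1}(z-\rho_{a,n})$ is exactly this monic degree-$(n+1)$ polynomial with these simple zeros.

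Next I would invoke Theorem \ref{conv}. With $r=1$ and $A=\{a\}\subseteq[n+1]$ it gives $v_{a,n+1}\ll v_{a,n}\ll v_{a+1,n+1}$, again strictly under the hypothesis. Reading off the single zeros, this says $\rho_{a,n+1}<\rho_{a,n}<\rho_{a+1,n+1}$ for every admissible $a$ (with the convention that the outer inequalities are vacuous at the ends). Chaining these across $a=1,\ldots,n+1$ yields the fully interleaved chain
\[
\rho_{1,n+1}<\rho_{1,n}<\rho_{2,n+1}<\rho_{2,n}<\cdots<\rho_{n+1,n+1}<\rho_{n+1,n},
\]
so the zeros of $p_n$ (the $\rho_{a,n}$) and the zeros of $p_{n+1}$ (the $\rho_{a,n+1}$) strictly interlace, and in particular have no zero in common. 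This is precisely the assertion of the corollary.

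The only point requiring care — and the main obstacle — is justifying that $p_n$ really is the product of the $n+1$ distinct linear Van Vleck factors with no repetitions, i.e.\ that the $n+1$ Van Vleck polynomials are pairwise distinct and each contributes exactly one (simple) zero to $p_n$. This follows from Theorem \ref{Main}(2): under the hypothesis that no $\theta$ annihilates $Q_0,\ldots,Q_n$ simultaneously (which is implied here since $M\ge 1$ forces $Q_0\equiv 0$ only if $M>0$; in any case the hypothesis $Q_M(\theta)=\cdots=Q_n(\theta)$ for no $\theta$ gives it), there are exactly $\binom{n+1}{1}=n+1$ pairs $(v,f)$, they are indexed bijectively by $S\in\binom{[n+1]}{1}$, and each $v$ is co-prime to its $f$ with simple zeros — so the $n+1$ zeros $\rho_{a,n}$ are genuinely distinct and $p_n$ has degree $n+1$ with exactly these simple zeros. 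Once this bookkeeping is in place, the interlacing is immediate from the chain above, so no further estimates are needed.
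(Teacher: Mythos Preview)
Your argument is correct and matches the paper's: the interlacing of $p_n$ and $p_{n+1}$ is exactly what one gets by unpacking Theorem~\ref{conv} in the case $r=1$. One small slip to fix: your displayed chain stops at $\rho_{n+1,n}$, but $p_{n+1}$ has $n+2$ zeros, so you must append $\rho_{n+1,n}<\rho_{n+2,n+1}$, which is the right half of the $a=n+1$ instance of Theorem~\ref{conv} (not vacuous, since $\{n+2\}\subseteq[n+2]$ is a legitimate index for $v_{\cdot,n+1}$).

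For the claim that $p_n$ and $p_{n+1}$ share no zero, you deduce it from the strict-interlacing clauses of Theorems~\ref{samev} and~\ref{conv}. The paper instead invokes Lemma~\ref{vv}: a common zero would mean a single Van Vleck polynomial admits monic Stieltjes polynomials of degrees $n$ and $n+1$, contradicting uniqueness. Both routes are valid under the stated hypothesis; the Lemma~\ref{vv} argument has the advantage of not relying on the co-primality condition on the $Q_k$, and it also makes your appeal to Theorem~\ref{Main}(2) for distinctness of the $\rho_{a,n}$ unnecessary.
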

\begin{proof}
The interlacing property follows immediately from Theorem \ref{conv}. That $p_n$ and $p_{n+1}$ have no zeros in common is a consequence of Lemma \ref{vv}.
\end{proof}
For the Heun equation Corollary \ref{vvv} reduces to the main result, Theorem 2.1, of \cite{BMV}. 

\section{Open Questions and Further Directions}
The Jacobi orthogonal polynomials arise as the Stieltjes polynomials corresponding to the case when $Q_2(z)$ is of degree $2$ in the classical Heine--Stieltjes theorem. Given a hyperbolicity preserving differential operator as in Theorem \ref{Main} (4) of Fuchs index $r=0$, it is natural 
to ask whether the Stieltjes polynomials form an orthogonal family. This was suggested by T. McMillen.

The linear operators considered in this paper are all finite order differential operators. One may 
try to extend our results to hyperbolicity preservers that are not finite order differential operators.  There is a natural class of such operators for which there are pairs of Van Vleck and Stieltjes polynomials for each possible zero pattern. The problem is to ensure the uniqueness of 
the Stieltjes--Van Vleck pair given the zero pattern. \\[2ex]

\noindent 
\textbf{Acknowledgments.} I thank Julius Borcea and Boris Shapiro for introducing me to the Heine--Stieltjes problem, and Julius Borcea for bringing \cite{BM} to my attention.

\bibliographystyle{amsplain}
\bibliography{HS}

\end{document}